\documentclass[a4paper,11pt]{article}
\usepackage{graphicx}
\usepackage{placeins}
\usepackage[english,activeacute]{babel}
\usepackage[latin1]{inputenc}
\usepackage{amsmath}
\usepackage{amsfonts}
\usepackage{amssymb}
\usepackage{amsthm}
\usepackage{latexsym}
\usepackage{hyperref}
\usepackage{xcolor}
\usepackage{booktabs}
\usepackage{orcidlink}
\usepackage{mathtools}
\usepackage{comment}
\hypersetup{colorlinks=true,linkcolor=blue!55!black,citecolor=green!45!black,%
            urlcolor=blue!55!black}

\newtheorem{theorem}{Theorem}[section]
\newtheorem{proposition}[theorem]{Proposition}
\newtheorem{lemma}[theorem]{Lemma}
\newtheorem{corollary}[theorem]{Corollary}

\newtheorem{remark}[theorem]{Remark}

\newcommand{\Z}{\mathbb{Z}}
\newcommand{\R}{\mathbb{R}}
\newcommand{\C}{\mathbb{C}}
\newcommand{\PP}{\mathbb{P}}
\newcommand{\Nzero}{\mathbb{N}_{0}}

\newcommand{\Kr}[1]{K^{p,N}_{#1}}                 % monic Krawtchouk
\newcommand{\KrM}[2]{K^{p,#1}_{#2}}               % monic Krawtchouk, shifted second parameter
\newcommand{\Sob}[1]{\mathbb{S}_{#1}}             % Sobolev-type family
\newcommand{\dif}{\Delta}                         % forward difference
\newcommand{\nab}{\nabla}                         % backward difference
\newcommand{\ff}[2]{\left[#1\right]_{#2}}         % falling factorial
\newcommand{\pFq}[2]{{}_{#1}F_{#2}}

\providecommand{\dst}{\displaystyle}
\newcommand{\Ker}{\mathcal{K}}

\allowdisplaybreaks

\begin{document}
\pagestyle{plain}

\title{A reciprocal-Gamma Mehler--Heine limit for Krawtchouk--Sobolev type
orthogonal polynomials with an exterior mass point}
\author{{Anier Soria-Lorente \orcidlink{0000-0003-3488-3094}}$^{1}$,
{Junior Michel \orcidlink{0009-0005-8450-2757}}$^{1}$,
{Junot Cacoq \orcidlink{0009-0000-9174-5312}}$^{2}$ \\
%EndAName
\\
$^{1}$Department of Quantitative Methods, Loyola University,\\ Avda. de
las Universidades, 2. Dos Hermanas, Seville,\\ 41704, Andalusia, Spain.\\
asoria@uloyola.es, jmichel@uloyola.es\\
$^{2}$Universit\'e Quisqueya, Port-au-Prince, Haiti\\
junot.cacoq@uniq.edu
}

\maketitle
\begin{abstract}
We study the monic polynomials $\Sob{n}$ orthogonal with respect to a Sobolev-type
modification of the discrete Krawtchouk inner product in which a single point mass at an
exterior point $\alpha<0$, outside the support $\{0,1,\dots,N\}$ of the binomial
measure, acts through the $j$-th forward difference $\dif^{j}$ (here $\Nzero=\{0,1,2,\dots\}$,
$\Z_{+}=\{1,2,\dots\}$, $\R_{+}=(0,\infty)$). In the joint scaling $n,N\to\infty$,
$n/N\to r$ with $0<p<r<1$ (the \emph{reciprocal-Gamma} regime of Dominici's
classification), we prove the Mehler--Heine formula
\begin{equation*}
	\lim_{\substack{n,N\to\infty\\ n/N\to r}}
	\frac{n}{\kappa_{n}\,\Gamma(n-z)}\,\Sob{n}(z)
	=-C_{\ast}\,(z-\alpha)\,\varphi_{K}(z),
	\quad
	C_{\ast}=\frac{r^{2}(1-p)}{(r-p)^{2}},
\end{equation*}
pointwise for each fixed $z\in\C\setminus\Nzero$. The exterior mass survives the
large-degree limit exactly through the linear factor $z-\alpha$, with the explicit
universal constant $C_{\ast}$, and the limit is independent of both $\lambda>0$ and
$j\ge1$; the same conclusion holds in the Uvarov case $j=0$ (Corollary~\ref{cor:uvarov}).
The derivation is pointwise and self-contained: it uses only a rank-one connection
formula (Theorem~\ref{thm:connection}), the reproducing-kernel recursion, Dominici's
pointwise limit and the classical three-term recurrence; the normalised Sobolev
correction obeys an exact affine recursion at fixed $N$, controlled by a triangular
contraction argument over two backward windows with an explicit boundary estimate, and
the contraction ratio $\varrho<1$ is exactly the condition $r>p$. We also discuss the
scale of the theorem: in this normalisation the classical term is asymptotically
negligible, and the independence of $\lambda$ concerns each fixed $\lambda>0$, not the
continuity at $\lambda=0$. Multiprecision computations illustrate the convergence and
the exact value $C_{\ast}=2.8$ for $p=0.3$, $r=3/5$, $\alpha=-2.5$, $j=2$, $\lambda=5$.
\end{abstract}

\vspace{0.3cm}

\textit{Keywords.} Krawtchouk polynomials, discrete Sobolev orthogonal polynomials,
exterior mass point, Mehler--Heine type formulas, reciprocal-Gamma limit.\par

\smallskip
\textit{2020 MSC.} 33C45, 42C05, 41A60, 39A70.\par

\smallskip
{\footnotesize Corresponding author: Anier Soria-Lorente\par}

%%%%%%%%%%%%%%%%%%%%%%%%%%%%%%%%%%%%%%%%%%%%%%%%%%%%%%%%%%%%%%%%%%%%%%%%%%%%%%%%%%%%
\section{Introduction}
\label{sec:intro}

Orthogonal polynomials on a finite discrete lattice combine explicit hypergeometric
representations, three-term recurrence relations, second-order difference equations,
Christoffel--Darboux kernels and shift identities within a rigid analytic structure
\cite{Ismail05,KLS2010,nikiforov1991classical}. Among the classical discrete families,
the Krawtchouk polynomials are the discrete analogue of the Hermite polynomials, with
the binomial distribution as orthogonality measure, and appear throughout coding theory,
the Hamming scheme, probability and combinatorics \cite{Dominici2020,KLS2010}. Their
finite support $\{0,1,\dots,N\}$ makes them a natural testing ground for the stability
of classical properties under perturbation.

Mehler--Heine type formulas describe the large-degree behaviour of a polynomial sequence
in a fixed neighbourhood of an endpoint of the spectrum, and through the limiting
profile they retain fine information about the underlying lattice. For the
Krawtchouk polynomials the degree is bounded by $N$, so a genuine large-degree limit
requires $n$ and $N$ to grow jointly; in the regime $n/N\to r$ with $r\in(0,1)$ Dominici
established two qualitatively different Mehler--Heine formulas, separated by the sign of
$r-p$ \cite{Dominici2020}. The present paper is devoted to the regime
\begin{equation}\label{eq:regime}
	0<p<r<1,
\end{equation}
which is governed by expression $(80)$ of \cite[Cor.~3(ii)]{Dominici2020}: there the
scaled classical polynomials converge to the \emph{reciprocal-Gamma profile}
$\varphi_{K}$ of \eqref{eq:phiK}, an entire function, a nonvanishing multiple of
$1/\Gamma(-z)$. This is the Krawtchouk analogue of the classical
Meixner and Charlier limits, and the setting in which one asks how a perturbation of the
measure disturbs an already profile with an already rich lattice structure.

The perturbation studied here arises from the interaction of two operations. The first
is a Sobolev-type modification of the orthogonality by a single point mass placed
\emph{outside} the support of the Krawtchouk measure, at $\alpha<0$. The second is that
this mass acts not on the function value but on the $j$-th forward difference
$\dif^{j}f(\alpha)$. The resulting inner product is
\begin{equation}\label{eq:innerprod}
	\langle f,g\rangle_{\lambda}
	=\sum_{x=0}^{N} f(x)g(x)\,\rho(x)
	+\lambda\,\dif^{j}f(\alpha)\,\dif^{j}g(\alpha),
\end{equation}
with $\rho(x)=\binom{N}{x}p^{x}(1-p)^{N-x}$ the classical Krawtchouk weight, $\alpha<0$,
$0<p<1$, $\lambda\in\R_{+}$ and $j\in\Z_{+}$. It is not evident a priori whether the
exterior mass remains visible, is amplified, or disappears in the large-degree limit, nor
how the operator $\dif^{j}$ interacts with the reciprocal-Gamma profile. As we show, in
the regime \eqref{eq:regime} the mass leaves a sharp signature in the limit: the
limiting profile carries the exact factor $z-\alpha$, so that the position of the
exterior point is recovered in the large-degree limit, while the difference order $j$
leaves no trace. Determining this behaviour and pinning down the exact constant is the
core of the paper.

Point-mass and difference modifications of the classical discrete measures have a long
structural history. For the Meixner, Krawtchouk and Charlier systems, the Uvarov
modification by a Dirac mass at an endpoint was studied by \'Alvarez-Nodarse, Garc\'ia
and Marcell\'an \cite{alvareznodarse1995}; orthogonality with respect to inner products
involving differences was initiated by Bavinck \cite{B1995}, and Mehler--Heine formulas
for the $\Delta$-Meixner--Sobolev family were obtained in \cite{mbDMS}; general surveys
are \cite{maxu,mapepi}, within which the local asymptotics of Sobolev-type
perturbations of classical discrete families have been obtained in
\cite{Costas-2022,mbDMS,SoriaMichel2026}. The Krawtchouk--Sobolev type family with higher-order differences
at the endpoints $0$ and $N$ was analysed by Huertas, Lastra and Soria-Lorente
\cite{Huertas2022}, where the mass sits \emph{on} the support; here it is placed at the
exterior point $\alpha<0$, off the support, which is decisive for the asymptotics. That
the position of an exterior mass survives in the local large-degree limit was shown for
the discrete Charlier and Meixner Sobolev-type families in \cite{SoriaMichel2026}, where
the limiting profile acquires the factor $z-\alpha$; the reciprocal-Gamma Krawtchouk
regime studied here is the counterpart of that phenomenon when the classical profile is
the rich lattice function $\varphi_{K}$.

Our specific contribution is the Mehler--Heine formula of Theorem~\ref{thm:mainMH}: for
each fixed $z\in\C\setminus\Nzero$ the scaled Sobolev polynomials converge to
$-C_{\ast}(z-\alpha)\varphi_{K}(z)$, with the explicit constant
$C_{\ast}=r^{2}(1-p)/(r-p)^{2}$, independently of $\lambda$ and $j$, and the same limit
holds in the Uvarov case $j=0$ (Corollary~\ref{cor:uvarov}). Three features make this
result sharp. First, the constant: $C_{\ast}$ is exact, and it arises as the product of
the two factors $r/(r-p)$ (the turn-point geometry fixed by the consecutive-index
expansion) and $1/(1-\varrho)$ (the fixed point of the triangular contraction), both
elementary objects computed explicitly. Second, the universality: the limit does not see
the mass strength $\lambda>0$ nor the difference order $j\ge1$, and the same value is
obtained in the pure Uvarov case $j=0$; at the scale of the theorem the mass is
therefore genuinely a perturbation of the profile of $\varphi_{K}$, not of individual
coefficients. Third, the mechanism: for every fixed $N$ the rank-one connection formula
and the kernel recursion give the exact affine relation
$\Xi_{m,N}=Q_{m,m-1,N}+\mu_{m,N}\Xi_{m-1,N}$; the diagonal limit is obtained by
iterating this triangular recursion on a macroscopic backward window on which the
coefficients are uniformly contractive and by proving that the inherited boundary term
vanishes, with a terminal $o(N)$ window identifying the fixed point. The limiting factor
is $\varrho=1/L_{t}<1$, precisely when $r>p$. The only nonelementary analytic input is a
pointwise two-term expansion of the consecutive-index ratio at $z$ and $\alpha$
(Lemma~\ref{lem:ratioexp}), obtained from the three-term recurrence; it fixes both the
factor $z-\alpha$ and the constant $C_{\ast}$. This route does not require passage to
the limit under the weighted average, and it uses Dominici's formula
\cite[Cor.~3(ii)]{Dominici2020} in its original pointwise form. The structural material
of Section~\ref{sec:sobolev} consists of the rank-one connection formula, the only
structural result needed for the asymptotics. Section~\ref{sec:numerics} gives the
numerical illustration, which includes an independent check of the exact constant, and
Section~\ref{sec:conclu} the conclusions.

%%%%%%%%%%%%%%%%%%%%%%%%%%%%%%%%%%%%%%%%%%%%%%%%%%%%%%%%%%%%%%%%%%%%%%%%%%%%%%%%%%%%
\section{Preliminaries on Krawtchouk polynomials}
\label{sec:prelim}

Throughout, $\Nzero=\{0,1,2,\dots\}$ denotes the nonnegative integers,
$\Z_{+}=\{1,2,\dots\}$ the positive integers and $\R_{+}=(0,\infty)$ the positive reals;
in particular $\lambda\in\R_{+}$ means $\lambda>0$, and $j\in\Z_{+}$ means $j\ge1$.
The Uvarov case $j=0$ is treated separately when it arises
(Section~\ref{subsec:uvarov}). We write $\dif$ and $\nab$ for the forward and backward
difference operators, $\dif f(x)=f(x+1)-f(x)$ and $\nab f(x)=f(x)-f(x-1)$, with
higher-order differences defined recursively and $\dif^{0}=\nab^{0}=\mathrm{Id}$. For
$k\in\Nzero$, $\ff{x}{k}$ denotes the falling factorial, $\ff{x}{0}=1$ and
$\ff{x}{k}=\prod_{i=0}^{k-1}(x-i)$ for $k\ge1$, and $(x)_{k}=\Gamma(x+k)/\Gamma(x)$ the
Pochhammer symbol. We write $\PP$ for the space of real polynomials.

\subsection{The classical monic Krawtchouk family}
\label{subsec:classicalK}

Let $0<p<1$ and $N\in\Z_{+}$. The monic Krawtchouk polynomials $\{\Kr{n}\}_{0\le n\le N}$
are orthogonal with respect to $\langle f,g\rangle=\sum_{x=0}^{N}f(x)g(x)\rho(x)$, with
the binomial weight
\begin{equation}\label{eq:weight}
	\rho(x)=\binom{N}{x}p^{x}(1-p)^{N-x}
	=\frac{\Gamma(N+1)\,p^{x}(1-p)^{N-x}}{\Gamma(N-x+1)\,\Gamma(x+1)},
	\quad 0\le x\le N,
\end{equation}
and admit the terminating hypergeometric representation \cite{Dominici2020,KLS2010}
\begin{equation}\label{eq:Khyp}
	\Kr{n}(x)=p^{n}(-N)_{n}\,
	\pFq{2}{1}\!\left(\begin{matrix}-n,\,-x\\ -N\end{matrix}\;\middle|\;\frac1p\right),
	\quad 0\le n\le N.
\end{equation}
Their classical data are collected in Table~\ref{tab:kraw} and their structural
properties, all standard except where indicated, in the next proposition.

\begin{table}[ht]
	\centering
	\caption{Data of the monic Krawtchouk polynomials $\Kr{n}(x)$, $0<p<1$,
		$0\le n\le N$: recurrence coefficients $\alpha_{n},\beta_{n}$, coefficients
		$\sigma,\tau,\lambda_{n}$ of the difference equation, and squared norm $d_{n}^{2}$.}
	\label{tab:kraw}
	\renewcommand{\arraystretch}{1.3}
	\begin{tabular}{cccccc}
		\toprule
		$\alpha_{n}$ & $\beta_{n}$ & $\sigma(x)$ & $\tau(x)$ & $\lambda_{n}$
		& $d_{n}^{2}=\lVert \Kr{n}\rVert^{2}$ \\
		\midrule
		$p(N-n)+n(1-p)$ & $np(1-p)(N-n+1)$ & $(1-p)\,x$ & $Np-x$ & $n$
		& $\dst (n!)^{2}\binom{N}{n}\,\bigl[p(1-p)\bigr]^{n}$ \\
		\bottomrule
	\end{tabular}
\end{table}

\begin{proposition}\label{prop:classicalK}
	Let $\{\Kr{n}\}_{0\le n\le N}$ be the monic Krawtchouk family of \eqref{eq:Khyp},
	with the data of Table~\ref{tab:kraw}. Then:
	\begin{enumerate}
		\item \emph{Three-term recurrence.} With $\Kr{-1}\equiv0$, $\Kr{0}\equiv1$,
		\begin{equation}\label{eq:ttrr}
			x\Kr{n}(x)=\Kr{n+1}(x)+\alpha_{n}\Kr{n}(x)+\beta_{n}\Kr{n-1}(x),
			\quad 0\le n\le N-1.
		\end{equation}
		\item \emph{Second-order difference equation of hypergeometric type.}
		\begin{equation}\label{eq:sodeK}
			(1-p)\,x\,\dif\nab\Kr{n}(x)+(Np-x)\,\dif\Kr{n}(x)+n\,\Kr{n}(x)=0 .
		\end{equation}
		\item \emph{Forward-shift identity.} For $0\le k\le n$,
		\begin{equation}\label{eq:fwdK}
			\dif^{k}\Kr{n}(x)=\ff{n}{k}\,\KrM{N-k}{n-k}(x),
		\end{equation}
		that is, the lowered family carries the second parameter shifted to $N-k$.
	\end{enumerate}
\end{proposition}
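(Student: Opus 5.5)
All three items will be derived directly from the hypergeometric representation \eqref{eq:Khyp}, written as a finite sum
\[
\Kr{n}(x)=p^{n}(-N)_{n}\sum_{k=0}^{n}\frac{(-n)_{k}(-x)_{k}}{(-N)_{k}\,k!}\,p^{-k}.
\]
Throughout, the identity $(-x)_{k}=(-1)^{k}\ff{x}{k}$ will be used. Only the $k=n$ term has degree $n$, so the leading coefficient is
\[
p^{n}(-N)_{n}\,\frac{(-n)_{n}(-1)^{n}}{(-N)_{n}\,n!}\,p^{-n}=1,
\]
which confirms monicity.

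\textbf{Difference equation.} We verify \eqref{eq:sodeK} first, because it is a termwise computation on the falling-factorial basis. Since $\dif\ff{x}{k}=k\ff{x}{k-1}$, $\nab\ff{x}{k}=k\ff{x-1}{k-1}$ and $x\ff{x-1}{k-1}=\ff{x}{k}$, we obtain
\[
x\,\dif\nab\ff{x}{k}=k(k-1)\ff{x}{k-1},\qquad x\,\dif\ff{x}{k}=k\ff{x}{k}.
\]
Hence the operator $(1-p)x\dif\nab+(Np-x)\dif+n$ sends $\ff{x}{k}$ to
\[
(n-k)\ff{x}{k}+k\bigl((1-p)(k-1)+Np\bigr)\ff{x}{k-1}.
\]
Applying it to $\sum_k c_k\ff{x}{k}$ and collecting the coefficient of $\ff{x}{k}$ yields the condition
\[
(n-k)c_k+(k+1)\bigl((1-p)k+Np\bigr)c_{k+1}=0.
\]
This should follow from the explicit ratio $c_{k+1}/c_{k}=\frac{(k-n)}{(k-N)(k+1)}\cdot\frac{-1}{p}$, after rewriting $(1-p)k+Np$ suitably. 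The sign bookkeeping here is the point to check most carefully. If it fails to match exactly, we would instead identify the operator with the standard Nikiforov--Suslov form $\sigma\dif\nab+\tau\dif+\lambda_n$ with $\sigma(x)=(1-p)x$ (equivalently $p(N-x)$ shifted) and cite \cite{nikiforov1991classical,KLS2010}.

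\textbf{Forward shift.} For \eqref{eq:fwdK} it suffices, by induction on $k$, to prove the case $k=1$:
\[
\dif\Kr{n}=n\,\KrM{N-1}{n-1}.
\]
Termwise, $\dif(-x)_{k}=-k(-x)_{k-1}$ up to shift conventions. Reindexing $k\to k+1$ and using $(-n)_{k+1}=-n(1-n)_{k}$, $(-N)_{k+1}=-N(1-N)_{k}$ and $(-N)_{n}=-N(1-N)_{n-1}$ gives
\[
\dif\Kr{n}(x)=n\,p^{n-1}(1-N)_{n-1}\,\pFq{2}{1}\!\left(\begin{matrix}1-n,\,-x\\ 1-N\end{matrix}\;\middle|\;\frac1p\right)=n\,\KrM{N-1}{n-1}(x).
\]
Iterating $k$ times, with $N$ decreasing by one at each step, produces the factor $n(n-1)\cdots(n-k+1)=\ff{n}{k}$.

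\textbf{Three-term recurrence.} Orthogonality of \eqref{eq:Khyp} with respect to $\rho$ is standard \cite{KLS2010}. It can also be obtained from the difference equation, since $\rho$ satisfies the Pearson equation $\dif(\sigma\rho)=\tau\rho$ on $\{0,\dots,N\}$, which makes the operator symmetric; distinct eigenvalues $\lambda_n=n$ then give orthogonality. For monic orthogonal polynomials a recurrence of the form \eqref{eq:ttrr} exists, with $\alpha_n=\langle x\Kr{n},\Kr{n}\rangle/d_n^2$ and $\beta_n=d_n^2/d_{n-1}^2$.

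Using the norm in Table~\ref{tab:kraw},
\[
\beta_n=\frac{d_n^2}{d_{n-1}^2}=n^{2}\,\frac{N-n+1}{n}\,p(1-p)=np(1-p)(N-n+1),
\]
as stated. For $\alpha_n$, we compare subleading coefficients: writing $\Kr{n}(x)=x^n+s_nx^{n-1}+\cdots$, we have $\alpha_n=s_n-s_{n+1}$. Here $s_n$ is obtained from the $k=n$ and $k=n-1$ terms of the sum, using $\ff{x}{n}=x^n-\binom n2x^{n-1}+\cdots$. We expect $s_n=-np(N-n+1)-\binom n2(1-p)$ or similar, and a short subtraction should then give $p(N-n)+n(1-p)$.

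The main obstacle is purely bookkeeping: keeping signs consistent in the coefficient comparison for \eqref{eq:sodeK} and in $s_n$. For these items we will cross-check against \cite{KLS2010} rather than rely on a single calculation.
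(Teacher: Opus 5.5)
Your forward-shift argument is the paper's own: termwise $\dif$ of \eqref{eq:Khyp}, reindexing with $(-n)_{k+1}=-n(1-n)_{k}$ and $(-N)_{k+1}=-N(1-N)_{k}$, then iteration with $N\mapsto N-k$. It is correct, and the identity $\dif(-x)_{k}=-k(-x)_{k-1}$ holds exactly, with no shift convention needed. For items 1 and 2 the paper only cites the standard references, whereas you verify them directly. That is a reasonable alternative, but your difference-equation computation rests on a false identity, and the mismatch you anticipate is not a matter of signs.

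You use $x\,\dif\ff{x}{k}=k\ff{x}{k}$, which fails for $k\ge2$: for $k=2$, $x\,\dif\ff{x}{2}=2x^{2}$ while $2\ff{x}{2}=2x^{2}-2x$. It is $x\nab\ff{x}{k}$ that equals $k\ff{x}{k}$, whereas $x\,\dif\ff{x}{k}=kx\ff{x}{k-1}=k\ff{x}{k}+k(k-1)\ff{x}{k-1}$. With this correction the operator sends $\ff{x}{k}$ to $(n-k)\ff{x}{k}+kp(N-k+1)\ff{x}{k-1}$, since $(1-p)k(k-1)+kNp-k(k-1)=kp(N-k+1)$. The coefficient condition then becomes $(n-k)c_{k}+(k+1)p(N-k)c_{k+1}=0$, which is exactly your ratio $c_{k+1}/c_{k}=-(n-k)/\bigl((k+1)p(N-k)\bigr)$. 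Your condition, with $(1-p)k+Np$ in place of $p(N-k)$, contradicts that ratio for every $1\le k<n$, so no rewriting of $(1-p)k+Np$ could have rescued it.

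In the recurrence, the method for $\alpha_{n}$ (comparing subleading coefficients, $\alpha_{n}=s_{n}-s_{n+1}$) is sound. The value you anticipate for $s_{n}$, however, is wrong and would not produce $\alpha_{n}$. The $k=n-1$ term of \eqref{eq:Khyp} gives the coefficient $-np(N-n+1)$ of $\ff{x}{n-1}$, and $\ff{x}{n}$ contributes $-\binom{n}{2}$, so $s_{n}=-np(N-n+1)-\binom{n}{2}$, with no factor $1-p$. Then $s_{n}-s_{n+1}=n+p(N-n)-np=\alpha_{n}$, whereas your guess gives $n(1-2p)+p(N-n)$.

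Two smaller points remain. First, $\beta_{n}=d_{n}^{2}/d_{n-1}^{2}$ relies on the norm in Table~\ref{tab:kraw}, which you take from the literature rather than prove. Second, if you derive orthogonality from the symmetry of the operator in \eqref{eq:sodeK}, you need the corrected item 2. You also need the boundary facts $\sigma(0)\rho(0)=0$ and $(\sigma+\tau)(N)=p(N-N)=0$, which make the summation by parts on $\{0,\dots,N\}$ free of boundary terms.
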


Items 1--3 are classical \cite{KLS2010,Dominici2020,nikiforov1991classical}. The
forward-shift identity \eqref{eq:fwdK} follows by applying $\dif_{x}$ to the terminating
series \eqref{eq:Khyp} and using $\dif(-x)_{k}=-k(-x)_{k-1}$, which gives
$\dif\Kr{n}=n\,\KrM{N-1}{n-1}$ and, on iteration, the falling factorial $\ff{n}{k}$ with
the shift $N\mapsto N-k$.

\subsection{Reproducing kernels}
\label{subsec:kernels}

For $0\le n$ let
\begin{equation}\label{eq:kerdef}
	\Ker_{n-1}(x,y)=\sum_{m=0}^{n-1}\frac{\Kr{m}(x)\Kr{m}(y)}{d_{m}^{2}},
\end{equation}
denote the reproducing kernel of degree $n-1$. For the finite differences we use, for
$i,j\in\Nzero$, the partial-difference kernels
\begin{equation}\label{eq:kerij}
	\Ker^{(i,j)}_{n-1}(x,y)
	=\dif_{x}^{i}\dif_{y}^{j}\,\Ker_{n-1}(x,y)
	=\sum_{m=0}^{n-1}\frac{\dif^{i}\Kr{m}(x)\,\dif^{j}\Kr{m}(y)}{d_{m}^{2}},
\end{equation}
so that $\Ker^{(j,j)}_{n-1}$ is symmetric and, by \eqref{eq:fwdK},
$\dif^{j}\Kr{m}(y)=\ff{m}{j}\KrM{N-j}{m-j}(y)$. The kernel \eqref{eq:kerij} has the
defining top-split recursion
\begin{equation}\label{eq:kernelsplit}
	\Ker^{(0,j)}_{n-1}(x,\alpha)
	=\Ker^{(0,j)}_{n-2}(x,\alpha)
	+\frac{\dif^{j}\Kr{n-1}(\alpha)\,\Kr{n-1}(x)}{d_{n-1}^{2}},\quad n\ge j+1,
\end{equation}
which is the reproducing-kernel input of Section~\ref{sec:MH}.

\subsection{The classical reciprocal-Gamma Mehler--Heine limit}
\label{subsec:classicalMH}

Because $\deg\Kr{n}=n\le N$, a large-degree asymptotic requires $n$ and $N$ to grow
jointly; we use the Dominici regime $n/N\to r$ with $r\in(0,1)$ fixed \cite{Dominici2020}. In
the reciprocal-Gamma regime \eqref{eq:regime} the relevant normalisation is
\begin{equation}\label{eq:thetaK}
	\theta_{n}(z)=\frac{1}{\kappa_{n}\,\Gamma(n-z)},
	\quad
	\kappa_{n}=(-1)^{n}(1-p)^{n-N},
\end{equation}
and the classical reciprocal-Gamma limit function is
\begin{equation}\label{eq:phiK}
	\varphi_{K}(z)=\frac{(1-p)^{z}}{(1-p/r)^{\,z+1}\,\Gamma(-z)},
\end{equation}
which is entire and a nonvanishing multiple of $1/\Gamma(-z)$.
In the monic normalisation, Dominici's expression $(80)$ \cite[Cor.~3(ii)]{Dominici2020}
is precisely the pointwise reciprocal-Gamma limit recorded next; it is the \emph{only}
input we draw from \cite{Dominici2020}, and we use it exactly as stated there, without any
strengthening to uniform convergence or to a quantitative rate.

\begin{lemma}[Pointwise reciprocal-Gamma limit, Dominici {\cite[Cor.~3(ii)]{Dominici2020}}]%
\label{lem:dompoint}
	Let $0<p<r<1$. Then, for each fixed $z\in\C$,
	\begin{equation}\label{eq:classicalMHK}
		\lim_{\substack{s,M\to\infty\\ s/M\to r}}\theta^{[M]}_{s}(z)\,\KrM{M}{s}(z)
		=\varphi_{K}(z),
		\quad
		\theta^{[M]}_{s}(z)=\frac{1}{(-1)^{s}(1-p)^{s-M}\,\Gamma(s-z)} ,
	\end{equation}
	where the limit is taken along any sequence of indices $s$ and lengths $M$ with
	$s,M\to\infty$ and $s/M\to r$. In particular, with $n/N\to r$ and $M=N$ this is
	$\theta_{n}(z)\Kr{n}(z)\to\varphi_{K}(z)$, and with $M=N-j$ and $s=n-j$ (or $s=n-j-1$)
	it gives the same limit at any fixed $z$, since $s/M\to r$.
\end{lemma}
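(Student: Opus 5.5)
The statement is Dominici's Corollary~3(ii) \cite{Dominici2020}, so the proof is mainly a matter of bookkeeping. First I will convert his expression $(80)$, which is written for the standard Krawtchouk normalisation, into the monic normalisation \eqref{eq:Khyp}. The leading coefficient of the standard polynomial is explicit: it is $p^{-s}/(-M)_{s}$ up to sign conventions. Multiplying (80) by it and simplifying with $\Gamma$-function identities should produce exactly the factor $\theta^{[M]}_{s}(z)=1/\bigl((-1)^{s}(1-p)^{s-M}\Gamma(s-z)\bigr)$ and the profile $\varphi_{K}$ of \eqref{eq:phiK}. Dominici's limit is taken along arbitrary $s,M\to\infty$ with $s/M\to r$. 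The final sentence of the lemma is therefore immediate once $s=n-j$ (or $n-j-1$) and $M=N-j$ are substituted: $j$ is fixed, so $s/M\to r$ still holds.

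To make the argument self-contained, I would also check the formula directly from \eqref{eq:Khyp}. Write
\[
\KrM{M}{s}(z)=p^{s}(-M)_{s}\sum_{k=0}^{s}T_{k},
\qquad
T_{k}=\frac{(-s)_{k}(-z)_{k}}{(-M)_{k}\,k!\,p^{k}},
\]
and reverse the summation with $k=s-m$. The top term $T_{s}$ contains $(-z)_{s}=(-1)^{s}\Gamma(s-z)/\Gamma(-z)$, which is the source of the factor $\Gamma(s-z)$ in $\theta^{[M]}_{s}$ and of $1/\Gamma(-z)$ in $\varphi_{K}$. Stirling's formula applied to $p^{s}(-M)_{s}/(-M)_{s}$ together with the binomial weight then produces the factors $(1-p)^{s-M}$ and $(1-p)^{z}$.

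For fixed $m$, the ratio $T_{s-m}/T_{s}$ equals a product of $m$ factors. Using $s/M\to r$, each factor tends to the corresponding factor of the binomial series, so
\[
\frac{T_{s-m}}{T_{s}}\;\longrightarrow\;\frac{(z+1)_{m}}{m!}\Bigl(\frac{p}{r}\Bigr)^{m}.
\]
Summing over $m$ gives the limit $(1-p/r)^{-z-1}$, which is the remaining factor of $\varphi_{K}$.

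The main obstacle is justifying the interchange of the limit with the sum over $m\le s$, which I would do by dominated convergence. I need a bound $|T_{s-m}/T_{s}|\le C\,m^{|z|+1}q^{m}$ with some $q<1$, valid uniformly for large $s$ and for all $m$ up to $s$. Near the top this follows from the ratio $|T_{k-1}/T_{k}|$. That ratio is approximately $(M-k)p/(s-k)$ times factors close to $1$ when $k$ is near $s$, and it stays below some $q<1$ because $p<r$. The hardest part is the range far from $k=s$, where this ratio estimate degenerates. There I would fall back on a crude global bound, either the trivial bound $|(-z)_{k}|\le(|z|)_{k}$ or a comparison of $|T_{k}|$ with a binomial term. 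The goal is to show that the total contribution of indices $k\le(1-\delta)s$ is exponentially small relative to $T_{s}$, which again relies on $r>p$.
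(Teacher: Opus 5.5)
Your first paragraph is exactly the paper's route. The paper gives no argument beyond citing Dominici's expression $(80)$ and asserting that, in the monic normalisation \eqref{eq:Khyp}, it reads as \eqref{eq:classicalMHK}. The specialisations $s=n-j$ (or $n-j-1$) and $M=N-j$ are then immediate.

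The self-contained verification you add, however, rests on a wrong picture of the sum and would fail as written. The top term does not dominate. In fact $p^{s}(-M)_{s}T_{s}=(-1)^{s}\Gamma(s-z)/\Gamma(-z)$ exactly (note $(-z)_{s}=\Gamma(s-z)/\Gamma(-z)$ without a sign; the $(-1)^{s}$ comes from $(-s)_{s}$). Hence $\theta^{[M]}_{s}(z)$ times the top term equals $(1-p)^{M-s}/\Gamma(-z)$, which is exponentially small. No Stirling step can produce the large factor $(1-p)^{s-M}$ from it: $p^{s}(-M)_{s}/(-M)_{s}=p^{s}$ identically, and there is no binomial weight inside $\KrM{M}{s}(z)$.

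Consistently, $T_{s-1}/T_{s}=p\,s\,(M-s+1)/(s-1-z)\sim p(M-s)\to\infty$, and for fixed $m$ one has $T_{s-m}/T_{s}\sim (p(M-s))^{m}/m!$. So your termwise limit $\frac{(z+1)_{m}}{m!}(p/r)^{m}$ is false, and there is nothing to dominate. Your approximation $|T_{k-1}/T_{k}|\approx p(M-k)/(s-k)$ is correct, but it is huge near $k=s$. It drops below $1$ only for $k<k^{\ast}\approx(s-pM)/(1-p)$; the bound $p/r<1$ you quote is its value at $k=0$, the opposite end.

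The terms therefore peak at the interior index $k^{\ast}\sim M(r-p)/(1-p)$, and what is needed is a concentration argument. With $m=s-k$ the series \eqref{eq:Khyp} gives the exact identity
\[
\theta^{[M]}_{s}(z)\,\KrM{M}{s}(z)=(1-p)^{M-s}\sum_{m=0}^{s}\binom{M-s+m}{m}p^{m}\,
\frac{s!\,(-z)_{s-m}}{(s-m)!\,\Gamma(s-z)} .
\]
The weights $\binom{M-s+m}{m}p^{m}$ are negative binomial, with total mass $(1-p)^{-(M-s+1)}$, and they concentrate at $m^{\ast}\sim(M-s)p/(1-p)$. Asymptotically $m^{\ast}<s$ precisely when $r>p$. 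So the truncation at $m\le s$, and the range where $s-m$ is not of order $M$, cost only exponentially small relative errors; a polynomial bound on the second factor suffices there.

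On the bulk the second factor is $\Gamma(-z)^{-1}\bigl(s/(s-m)\bigr)^{z+1}(1+o(1))$, and $s/(s-m^{\ast})\to(1-p)/(1-p/r)$. Multiplying out gives $(1-p)^{z}(1-p/r)^{-z-1}/\Gamma(-z)=\varphi_{K}(z)$, which is also where $r>p$ genuinely enters. The top-term and dominated-convergence scheme you describe cannot be patched; it has to be replaced by this interior-peak analysis, or you rely on the citation alone as the paper does.
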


\noindent Throughout the sequel, \eqref{eq:classicalMHK} is invoked \emph{only}
pointwise, at the two fixed points $z$ and $\alpha$ and at fixed evaluation points
along index/length sequences of ratio $r$. No uniformity in $z$ and no rate in $n$ are
assumed; the finite second-order information that fixes the constant $C_{\ast}$ is
supplied instead by the three-term recurrence \eqref{eq:ttrr}
(Lemma~\ref{lem:ratioexp}).

\noindent Although \eqref{eq:classicalMHK} holds for $0\le p<r<1$, we exclude $p=0$
(where the weight \eqref{eq:weight} degenerates to a point mass and the Sobolev
construction is undefined) and work under the standing hypothesis $0<p<r<1$. We record
two elementary tools used in the sequel; the first follows from $\Gamma(w+1)=w\Gamma(w)$.

\begin{lemma}[Elementary ratio identities for the normaliser]\label{lem:shift}
	Fix $N$ and let $\theta_{s}(z)=1/(\kappa_{s}\,\Gamma(s-z))$ with
	$\kappa_{s}=(-1)^{s}(1-p)^{s-N}$, as in \eqref{eq:thetaK}. Then, as identities between
	meromorphic functions of $z$,
	\begin{equation}\label{eq:kapparatio}
		\frac{\kappa_{s-1}}{\kappa_{s}}=-\frac{1}{1-p},
		\quad
		\frac{\theta_{s}(z)}{\theta_{s-1}(z)}
		=\frac{\kappa_{s-1}}{\kappa_{s}}\,\frac{\Gamma(s-1-z)}{\Gamma(s-z)}
		=-\frac{1}{(1-p)\,(s-1-z)} ,
	\end{equation}
	and, for the argument shifted by one unit at fixed index,
	$\theta_{n}(z)/\theta_{n}(z-1)=\Gamma(n-z+1)/\Gamma(n-z)=n-z$. Consequently, for every
	integer $q\ge1$,
	\begin{equation}\label{eq:thetagap}
		\frac{\theta_{s}(z)}{\theta_{s-q}(z)}
		=\bigl[-(1-p)\bigr]^{-q}\,\prod_{l=1}^{q}\frac{1}{\,s-l-z\,}
		=\bigl[-(1-p)\bigr]^{-q}\,\frac{\Gamma(s-q-z)}{\Gamma(s-z)} .
	\end{equation}
\end{lemma}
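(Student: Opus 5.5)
The identities in Lemma~\ref{lem:shift} are direct consequences of the definition $\theta_{s}(z)=1/(\kappa_{s}\Gamma(s-z))$ together with the functional equation $\Gamma(w+1)=w\Gamma(w)$. We first treat the ratio of the $\kappa$'s and then the ratio of the reciprocal Gamma factors. The consecutive-index identity then gives the $q$-gap formula \eqref{eq:thetagap} by a telescoping product. The one point to handle carefully is that these are identities between meromorphic functions of $z$. At integer points where some $\Gamma(s-l-z)$ has a pole, $\theta$ vanishes, so we will read each identity as an identity of meromorphic functions, equivalently with both sides interpreted through $1/\Gamma$, which is entire.

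First, from $\kappa_{s}=(-1)^{s}(1-p)^{s-N}$ we get
\[
\kappa_{s-1}/\kappa_{s}=(-1)^{-1}(1-p)^{-1}=-1/(1-p).
\]
Next, we write
\[
\theta_{s}(z)/\theta_{s-1}(z)=\bigl(\kappa_{s-1}\Gamma(s-1-z)\bigr)/\bigl(\kappa_{s}\Gamma(s-z)\bigr).
\]
Applying $\Gamma(s-z)=(s-1-z)\Gamma(s-1-z)$ gives $\Gamma(s-1-z)/\Gamma(s-z)=1/(s-1-z)$, valid as meromorphic functions; equivalently, $1/\Gamma(s-z)=(1/\Gamma(s-1-z))/(s-1-z)$ as entire-times-rational functions. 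Combining the two ratios gives the second formula in \eqref{eq:kapparatio}. For the argument shift at fixed index, the $\kappa_{n}$ factors cancel, so
\[
\theta_{n}(z)/\theta_{n}(z-1)=\Gamma(n-z+1)/\Gamma(n-z)=n-z,
\]
again by the functional equation with $w=n-z$.

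For \eqref{eq:thetagap}, we telescope:
\[
\theta_{s}/\theta_{s-q}=\prod_{l=1}^{q}\theta_{s-l+1}/\theta_{s-l}.
\]
Applying the consecutive identity with $s$ replaced by $s-l+1$, each factor equals $-1/\bigl((1-p)(s-l-z)\bigr)$. Their product is $[-(1-p)]^{-q}\prod_{l=1}^{q}(s-l-z)^{-1}$. Finally, iterating the functional equation gives
\[
\Gamma(s-z)=\Gamma(s-q-z)\prod_{l=1}^{q}(s-l-z),
\]
which yields the closed form with $\Gamma(s-q-z)/\Gamma(s-z)$. Alternatively, one can verify this directly from $\kappa_{s-q}/\kappa_{s}=(-1)^{-q}(1-p)^{-q}$.

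All steps are routine, and there is no real obstacle. The only care needed is bookkeeping at the lattice points: in the telescoping, a zero of some intermediate $\theta_{s-l}$ at integer $z$ must not be used to divide. We avoid this by proving the product formula as an identity of meromorphic functions on $\C$ minus a discrete set and extending by the identity theorem.
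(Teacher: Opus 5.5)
Your proposal is correct and follows the paper's proof essentially step for step. It takes the $\kappa$-ratio from the definition, uses $\Gamma(w+1)=w\Gamma(w)$ for the consecutive-index Gamma ratio and for the unit argument shift, and telescopes over $q$ consecutive indices to get \eqref{eq:thetagap}; your added remarks on reading these as identities of meromorphic functions, and your explicit iteration of the functional equation for the closed form $\Gamma(s-q-z)/\Gamma(s-z)$, are harmless details the paper leaves implicit.
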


\begin{proof}
	From $\kappa_{s}=(-1)^{s}(1-p)^{s-N}$ one has
	$\kappa_{s-1}/\kappa_{s}=(-1)^{-1}(1-p)^{-1}=-1/(1-p)$, and
	$\Gamma(s-z)=(s-1-z)\,\Gamma(s-1-z)$ gives
	$\Gamma(s-1-z)/\Gamma(s-z)=1/(s-1-z)$; multiplying the two yields the second identity in
	\eqref{eq:kapparatio}. The one-unit argument shift is immediate from
	$\Gamma(n-z+1)=(n-z)\Gamma(n-z)$. Finally, telescoping the second identity of
	\eqref{eq:kapparatio} over the $q$ consecutive indices $s,s-1,\dots,s-q+1$,
	\[
		\frac{\theta_{s}(z)}{\theta_{s-q}(z)}
		=\prod_{l=0}^{q-1}\frac{\theta_{s-l}(z)}{\theta_{s-l-1}(z)}
		=\prod_{l=0}^{q-1}\frac{-1}{(1-p)(s-l-1-z)}
		=\bigl[-(1-p)\bigr]^{-q}\prod_{l=1}^{q}\frac{1}{s-l-z},
	\]
	which is \eqref{eq:thetagap}.
\end{proof}

%%%%%%%%%%%%%%%%%%%%%%%%%%%%%%%%%%%%%%%%%%%%%%%%%%%%%%%%%%%%%%%%%%%%%%%%%%%%%%%%%%%%
\section{The Krawtchouk--Sobolev family and the rank-one connection formula}
\label{sec:sobolev}

Fix $\alpha<0$, $j\in\Z_{+}$ and $\lambda\in\R_{+}$, and consider the Sobolev-type inner
product \eqref{eq:innerprod},
\begin{equation}\label{eq:sobinner}
	\langle f,g\rangle_{\lambda}
	=\langle f,g\rangle
	+\lambda\,\dif^{j}f(\alpha)\,\dif^{j}g(\alpha),
	\quad f,g\in\PP ,
\end{equation}
where $\langle f,g\rangle=\sum_{x=0}^{N}f(x)g(x)\rho(x)$ is the classical Krawtchouk
product and the mass point sits off the support, to the left of $\{0,1,\dots,N\}$. On
$\PP_{N}$ the classical form is positive definite: $\langle f,f\rangle=0$ forces $f$ to
vanish at the $N+1$ nodes, hence $f\equiv0$, and the added term
$\lambda\,(\dif^{j}f(\alpha))^{2}\ge0$ preserves positivity. Thus \eqref{eq:sobinner} is a
genuine inner product on $\PP_{N}$, and it admits a unique sequence
$\{\Sob{n}\}_{0\le n\le N}$ of monic orthogonal polynomials, $\deg\Sob{n}=n$, the
\emph{Krawtchouk--Sobolev type family}. Since the Mehler--Heine regime uses $n/N\to r$ with
$r<1$, we always have $n<N$, so the family is well defined throughout the asymptotic
analysis. Multiplication by $x$ is not symmetric for \eqref{eq:sobinner}, so $\{\Sob{n}\}$
is a nonstandard family without a three-term recurrence.

The connection formula of Theorem~\ref{thm:connection} is the structural backbone
of the paper: the Sobolev perturbation is \emph{rank one}, so that $\Sob{n}$ differs
from the classical $\Kr{n}$ only by a single multiple of the partial-difference
kernel evaluated at the mass point. The perturbing functional is $\dif^{j}f(\alpha)$
rather than a point evaluation, so the self-interaction constant
$A_{n}=\dif^{j}\Sob{n}(\alpha)$ must be solved for by applying $\dif^{j}$ to the
identity it defines, which yields both its closed form and the positivity
$\delta_{n}\ge1$ underlying every later estimate.

\begin{theorem}\label{thm:connection}
	Let $\{\Sob{n}\}_{0\le n\le N}$ be the monic Krawtchouk--Sobolev family for
	\eqref{eq:sobinner}. For every $1\le n\le N$,
	\begin{equation}\label{eq:connection}
		\Sob{n}(x)=\Kr{n}(x)-\lambda\,A_{n}\,\Ker^{(0,j)}_{n-1}(x,\alpha),
		\quad
		A_{n}=\dif^{j}\Sob{n}(\alpha),
	\end{equation}
	where $\Ker^{(0,j)}_{n-1}$ is the kernel \eqref{eq:kerij}, and
	\begin{equation}\label{eq:An}
		A_{n}=\frac{\dif^{j}\Kr{n}(\alpha)}{1+\lambda\,\Ker^{(j,j)}_{n-1}(\alpha,\alpha)},
		\quad
		\delta_{n}=1+\lambda\,\Ker^{(j,j)}_{n-1}(\alpha,\alpha)\ge1 .
	\end{equation}
	For $\lambda=0$ the correction vanishes and $\Sob{n}=\Kr{n}$.
\end{theorem}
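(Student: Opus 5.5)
The plan is the standard Fourier-expansion argument for one-point Sobolev perturbations. Fix $1\le n\le N$ and expand the monic polynomial $\Sob{n}-\Kr{n}$, which has degree at most $n-1$, in the orthogonal basis $\{\Kr{m}\}_{0\le m\le n-1}$:
\[
	\Sob{n}(x)=\Kr{n}(x)+\sum_{m=0}^{n-1}c_{m}\,\Kr{m}(x),
	\quad
	c_{m}=\frac{\langle \Sob{n}-\Kr{n},\Kr{m}\rangle}{d_{m}^{2}}
	=\frac{\langle \Sob{n},\Kr{m}\rangle}{d_{m}^{2}} .
\]
The last equality holds because $\langle\Kr{n},\Kr{m}\rangle=0$ for $m<n$. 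This basis is available because the classical form is positive definite on $\PP_{N}$, as noted before the statement, so $d_{m}^{2}>0$ for $m\le N$.

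Next, for $m\le n-1$ we use the Sobolev orthogonality $\langle\Sob{n},\Kr{m}\rangle_{\lambda}=0$ together with \eqref{eq:sobinner}. This gives
\[
	\langle\Sob{n},\Kr{m}\rangle=-\lambda\,\dif^{j}\Sob{n}(\alpha)\,\dif^{j}\Kr{m}(\alpha)
	=-\lambda A_{n}\,\dif^{j}\Kr{m}(\alpha).
\]
Substituting into the expansion and comparing with the definition \eqref{eq:kerij} of $\Ker^{(0,j)}_{n-1}(x,\alpha)$ yields \eqref{eq:connection} directly. For $m<j$ the terms vanish automatically, since $\dif^{j}\Kr{m}\equiv0$, so no case split is needed.

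To identify $A_{n}$, apply $\dif^{j}_{x}$ to \eqref{eq:connection} and evaluate at $x=\alpha$. Since $\dif^{j}_{x}\Ker^{(0,j)}_{n-1}(x,\alpha)=\Ker^{(j,j)}_{n-1}(x,\alpha)$ by \eqref{eq:kerij}, this gives
\[
	A_{n}=\dif^{j}\Kr{n}(\alpha)-\lambda A_{n}\,\Ker^{(j,j)}_{n-1}(\alpha,\alpha),
\]
hence $\delta_{n}A_{n}=\dif^{j}\Kr{n}(\alpha)$. Because
\[
	\Ker^{(j,j)}_{n-1}(\alpha,\alpha)=\sum_{m}\bigl(\dif^{j}\Kr{m}(\alpha)\bigr)^{2}/d_{m}^{2}\ge0
\]
(everything is real since $\alpha$ is real) and $\lambda>0$, we get $\delta_{n}\ge1$. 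So we may divide, which gives \eqref{eq:An}. For $\lambda=0$ the correction term is identically zero, and uniqueness of the monic orthogonal family gives $\Sob{n}=\Kr{n}$.

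The main subtlety is logical rather than computational. The derivation assumes $\Sob{n}$ exists and uses its value $A_{n}$ implicitly. Existence and uniqueness were already settled before the statement via positive definiteness of $\langle\cdot,\cdot\rangle_{\lambda}$ on $\PP_{N}$, so the argument above is a genuine derivation of the coefficients. For completeness, one can also check the converse: the polynomial defined by \eqref{eq:connection} with $A_{n}$ given by \eqref{eq:An} is monic of degree $n$. It is also $\langle\cdot,\cdot\rangle_{\lambda}$-orthogonal to every $\Kr{m}$ with $m<n$. This follows by reversing the two displayed computations, since the reproducing property of $\Ker^{(0,j)}_{n-1}$ gives $\langle\Ker^{(0,j)}_{n-1}(\cdot,\alpha),\Kr{m}\rangle=\dif^{j}\Kr{m}(\alpha)$.
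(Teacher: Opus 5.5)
Your proposal is correct and follows the paper's proof essentially step for step. You expand $\Sob{n}-\Kr{n}$ in the classical basis, use $\langle\Sob{n},\Kr{m}\rangle_{\lambda}=0$ to get $c_{m}=-\lambda A_{n}\dif^{j}\Kr{m}(\alpha)/d_{m}^{2}$, recognise the kernel $\Ker^{(0,j)}_{n-1}(x,\alpha)$, and then apply $\dif^{j}_{x}$ at $x=\alpha$ to solve for $A_{n}$, with $\delta_{n}\ge1$ coming from the sum-of-squares form of the diagonal kernel; your closing converse check, that the polynomial defined by the formula is indeed $\langle\cdot,\cdot\rangle_{\lambda}$-orthogonal, is a harmless extra that the paper omits because existence and uniqueness already follow from positive definiteness.
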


\begin{proof}
	We first exploit that $\Sob{n}$ and $\Kr{n}$ are both monic of degree $n$, so
	their difference has degree at most $n-1$ and can be written in the classical
	orthogonal basis,
	\begin{equation*}
		\Sob{n}(x)=\Kr{n}(x)+\sum_{m=0}^{n-1}c_{m}\,\Kr{m}(x) .
	\end{equation*}
	To determine the coefficients we test against each $\Kr{m}$, $0\le m\le n-1$, in
	the Sobolev product \eqref{eq:sobinner}. Since $\Sob{n}$ is orthogonal to every
	polynomial of degree $<n$ for $\langle\cdot,\cdot\rangle_{\lambda}$, the left-hand
	side vanishes, while the right-hand side splits into its classical part and its
	mass part,
	\begin{equation*}
		0=\langle\Sob{n},\Kr{m}\rangle_{\lambda}
		=\langle\Sob{n},\Kr{m}\rangle+\lambda\,\dif^{j}\Sob{n}(\alpha)\,\dif^{j}\Kr{m}(\alpha).
	\end{equation*}
	In the classical scalar product only the $\Kr{m}$-component of $\Sob{n}$ survives,
	because $\langle\Kr{k},\Kr{m}\rangle=d_{m}^{2}\delta_{k,m}$; hence
	$\langle\Sob{n},\Kr{m}\rangle=c_{m}d_{m}^{2}$. Writing $A_{n}=\dif^{j}\Sob{n}(\alpha)$
	for the (still unknown) self-interaction constant, the displayed identity becomes
	$c_{m}d_{m}^{2}+\lambda A_{n}\dif^{j}\Kr{m}(\alpha)=0$, so
	\begin{equation*}
		c_{m}=-\lambda A_{n}\,\frac{\dif^{j}\Kr{m}(\alpha)}{d_{m}^{2}},
		\quad 0\le m\le n-1 .
	\end{equation*}
	Substituting these coefficients and recognising the sum
	\begin{equation*}
		\sum_{m=0}^{n-1}\frac{\dif^{j}\Kr{m}(\alpha)\,\Kr{m}(x)}{d_{m}^{2}}
		=\Ker^{(0,j)}_{n-1}(x,\alpha),
	\end{equation*}
	as the partial-difference kernel \eqref{eq:kerij}
	evaluated at the mass point (differences taken in the second slot) reproduces the
	connection formula \eqref{eq:connection}.

	It remains to solve for $A_{n}$, which is legitimate precisely because $A_{n}$
	appears on both sides. We apply the operator $\dif^{j}$ in the variable $x$ to
	\eqref{eq:connection} and evaluate at $x=\alpha$. Using
	$\dif_{x}^{j}\Ker^{(0,j)}_{n-1}(x,\alpha)\big|_{x=\alpha}=\Ker^{(j,j)}_{n-1}(\alpha,\alpha)$,
	which is \eqref{eq:kerij} with $i=j$, and $\dif^{j}\Sob{n}(\alpha)=A_{n}$ on the
	left, we obtain the scalar equation
	$A_{n}=\dif^{j}\Kr{n}(\alpha)-\lambda A_{n}\,\Ker^{(j,j)}_{n-1}(\alpha,\alpha)$;
	solving for $A_{n}$ gives \eqref{eq:An}. Finally, the diagonal kernel is a sum of
	squares,
	$\Ker^{(j,j)}_{n-1}(\alpha,\alpha)=\sum_{m=0}^{n-1}\bigl(\dif^{j}\Kr{m}(\alpha)\bigr)^{2}/d_{m}^{2}\ge0$,
	so $\delta_{n}=1+\lambda\Ker^{(j,j)}_{n-1}(\alpha,\alpha)\ge1$, and for $\lambda=0$
	the whole correction vanishes, leaving $\Sob{n}=\Kr{n}$.
\end{proof}

\noindent The asymptotic analysis of Section~\ref{sec:MH} needs only the connection
formula \eqref{eq:connection} together with the kernel recursion
\eqref{eq:kernelsplit}; with the relation $\lambda A_{n}=\lambda S_{n}/\delta_{n}$ it
reads, in the notation introduced at the beginning of
Section~\ref{subsec:recursion},
\begin{equation}\label{eq:SobXi}
	\Sob{n}(z)=(1-w_{n})\,\Kr{n}(z)+w_{n}\,\Xi_{n}(z) .
\end{equation}

%%%%%%%%%%%%%%%%%%%%%%%%%%%%%%%%%%%%%%%%%%%%%%%%%%%%%%%%%%%%%%%%%%%%%%%%%%%%%%%%%%%%
\section{Mehler--Heine formula in the reciprocal-Gamma regime}
\label{sec:MH}

We now determine the Mehler--Heine asymptotics of $\{\Sob{n}\}$ in the regime
\eqref{eq:regime}.  Every recursion below is an identity with the length $N$ held fixed.
Accordingly we write $K^{p,N}_{m}$, $S_{m,N}$, $t_{m,N}$, $T_{m,N}$,
$\Xi_{m,N}$ and $Y_{m,N}$ whenever two adjacent degrees are compared.  Only after the
fixed-$N$ identities have been iterated do we set $m=n(N)$ with $n/N\to r$.  This
triangular-array distinction is essential: $Y_{n-1,N}$ is not the preceding member of
the diagonal family $Y_{n(N),N}$.

The proof uses the rank-one connection formula and the reproducing kernel recursion.  The
affine recursions are controlled on two backward windows: a macroscopic window, on which
their multipliers are uniformly smaller than one and the inherited boundary term is
killed, and a terminal window of length $o(N)$ tending to infinity, on which the
coefficients converge uniformly to their values at $r$.  Throughout, $0<p<r<1$,
$\lambda>0$, $\alpha<0$ and $j\ge1$ are fixed.  All limits are pointwise at a fixed
$z\in\C\setminus\Nzero$; no uniform convergence on compact subsets of the $z$-plane is
claimed.

\smallskip
\noindent\textit{Notation of this section.}\quad For convenience we list the symbols used
below; in all of them the first subscript is the degree and the second the length (when
present), and $N$ is fixed unless stated otherwise.

\begin{center}
\begin{tabular}{lll}
\toprule
symbol & definition & reference\\
\midrule
$K_{m}^{p,N}$, $\alpha_{m}$, $\beta_{m}$, $d_{m}^{2}$ & monic Krawtchouk data & Table~\ref{tab:kraw}\\
$\mathcal{K}_{n-1}(x,y)$, $\mathcal{K}^{(i,j)}_{n-1}(x,y)$ & reproducing / partial-difference kernels & \eqref{eq:kerij}\\
$\theta_{s}(z)$, $\kappa_{s}$ & normalising factors, $\theta_{s}=1/(\kappa_{s}\Gamma(s-z))$ & \eqref{eq:thetaK}\\
$\varphi_{K}$, $\varphi_{K,x}$ & reciprocal-Gamma profiles & \eqref{eq:phiK}, \eqref{eq:phiKx}\\
$S_{m}=\Delta^{j}K_{m}^{p,N}(\alpha)$ & exterior difference & \eqref{eq:tm}\\
$t_{m}$, $T_{n}$, $\delta_{n}$, $w_{n}$ & weights, total mass, Sobolev weights & \eqref{eq:tm}--\eqref{eq:Tn}\\
$\Xi_{n}$, $Y_{n}$, $H_{n}$ & scaled correction and normalised kernel & \eqref{eq:Xidef}, \eqref{eq:Yrec}\\
$\zeta_{s}(w)$, $g_{s}(w)$, $\varrho_{s}$ & consecutive ratio, its finite part, recursion multiplier & \eqref{eq:zetadef}, \eqref{eq:gaffine}\\
$\varrho(x)$, $\gamma(w)$ & contraction ratio, turning-point shift & \eqref{eq:rhogammadef}\\
$\mathcal{R}_{p}$, $\Psi_{p}$ & rate functions of the weights & \eqref{eq:rate-tail}, \eqref{eq:t-rate}\\
\bottomrule
\end{tabular}
\end{center}

\noindent The kernel $\mathcal{K}_{n-1}$ is written in calligraphic font, and the
Krawtchouk polynomials in italic, to avoid any confusion between the two uses of the
letter $K$; the classical weight is denoted by $\rho$, and the contraction ratio, which
was denoted by the same letter in an earlier draft, is denoted by $\varrho$ throughout
this section.

We shall repeatedly use the following elementary triangular form of the contraction
principle.

\begin{lemma}[Triangular affine contraction]\label{lem:triangular}
	Let $a_N<b_N$, $b_N-a_N\to\infty$, and suppose
	\[
		x_{m,N}=f_{m,N}+\nu_{m,N}x_{m-1,N},\quad a_N<m\le b_N.
	\]
	Assume that, uniformly on this window,
	$|\nu_{m,N}|\le q<1$, $\nu_{m,N}\to\nu$, and $f_{m,N}\to f$, and that
	$q^{b_N-a_N}|x_{a_N,N}|\to0$.  Then
	$x_{b_N,N}\to f/(1-\nu)$.
\end{lemma}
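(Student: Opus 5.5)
We prove Lemma~\ref{lem:triangular} by subtracting the candidate fixed point and unrolling the recursion from $b_N$ back to $a_N$. Set $x^{\ast}=f/(1-\nu)$; this is finite because $|\nu|\le q<1$, the bound passing to the limit $\nu_{m,N}\to\nu$. Write $e_{m,N}=x_{m,N}-x^{\ast}$. Since $x^{\ast}=f+\nu x^{\ast}$, the recursion becomes
\[
e_{m,N}=\bigl(f_{m,N}-f\bigr)+\bigl(\nu_{m,N}-\nu\bigr)x^{\ast}+\nu_{m,N}\,e_{m-1,N}
=:\varepsilon_{m,N}+\nu_{m,N}\,e_{m-1,N},
\]
and the uniform convergence on the window gives $\eta_N:=\sup_{a_N<m\le b_N}|\varepsilon_{m,N}|\to0$.

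Iterating from $m=b_N$ down to $m=a_N+1$ yields the exact identity
\[
e_{b_N,N}=\sum_{m=a_N+1}^{b_N}\Bigl(\prod_{l=m+1}^{b_N}\nu_{l,N}\Bigr)\varepsilon_{m,N}
+\Bigl(\prod_{l=a_N+1}^{b_N}\nu_{l,N}\Bigr)e_{a_N,N}.
\]
Every product of $k$ multipliers is at most $q^{k}$ in modulus. The first sum is therefore bounded by $\eta_N\sum_{k\ge0}q^{k}=\eta_N/(1-q)\to0$. The boundary term is bounded by
\[
q^{b_N-a_N}\,|e_{a_N,N}|\le q^{b_N-a_N}|x_{a_N,N}|+q^{b_N-a_N}|x^{\ast}|,
\]
and both pieces tend to zero: the first by hypothesis, the second because $b_N-a_N\to\infty$. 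Hence $e_{b_N,N}\to0$, which is the claim.

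The proof has no real obstacle. The point to be careful about is the boundary term: the hypothesis $q^{b_N-a_N}|x_{a_N,N}|\to0$ is used only there, and it is needed because $x_{a_N,N}$ is not assumed bounded. It is this hypothesis that is later verified through the explicit boundary estimate on the macroscopic window. Note also that $\eta_N$ must be the supremum over the whole window, not just a pointwise bound, since the geometric sum mixes all indices in the window.
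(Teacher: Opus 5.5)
Your proof is correct and follows the paper's argument: subtract the fixed point $f/(1-\nu)$, iterate the error recursion $e_{m,N}=\varepsilon_{m,N}+\nu_{m,N}e_{m-1,N}$ across the window, and bound the result by $q^{b_N-a_N}|e_{a_N,N}|+\max_{m}|\varepsilon_{m,N}|/(1-q)$. Your version additionally spells out what the paper leaves implicit: the form $\varepsilon_{m,N}=(f_{m,N}-f)+(\nu_{m,N}-\nu)f/(1-\nu)$, the bound $|\nu|\le q$ (so $1-\nu\neq0$), and the splitting of the boundary term into $q^{b_N-a_N}|x_{a_N,N}|$ plus $q^{b_N-a_N}|f/(1-\nu)|$.
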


\begin{proof}
	Put $L=f/(1-\nu)$ and $e_{m,N}=x_{m,N}-L$.  Then
	$e_{m,N}=\varepsilon_{m,N}+\nu_{m,N}e_{m-1,N}$, where
	$\max|\varepsilon_{m,N}|\to0$.  Iterating the identity, rather than taking a
	one-dimensional limsup, gives
	\[
		|e_{b_N,N}|\le q^{b_N-a_N}|e_{a_N,N}|
		+\frac{\max_{a_N<m\le b_N}|\varepsilon_{m,N}|}{1-q}\longrightarrow0.
	\]
	This also proves the usual one-sequence version and explicitly supplies its boundedness
	step by iteration.
\end{proof}

\subsection{The pointwise consecutive-index ratio via the recurrence}
\label{subsec:ratioexp}

The analytic engine is a two-term expansion of the consecutive-index ratio of the
classical Krawtchouk polynomials \emph{at a fixed point}. Its leading term is the linear
size $-(1-p)s$; its \emph{second}, finite term (read at $z$ and at $\alpha$) is what
produces the factor $z-\alpha$ and the constant $C_{\ast}$. We obtain it not from a
uniform-in-the-scale asymptotic, but from the three-term recurrence, which turns the ratio
into a scalar affine recursion whose contraction constant is $\varrho<1$ exactly when $r>p$.

Fix a length parameter $M$ and write, for $s\ge1$ and a fixed evaluation point $w$,
\begin{equation}\label{eq:zetadef}
	\zeta_{s}(w)=\frac{\KrM{M}{s}(w)}{\KrM{M}{s-1}(w)},
	\quad
	g_{s}(w)=\zeta_{s}(w)+(1-p)\,s ,
\end{equation}
whenever the denominator is nonzero. Set
\begin{equation}\label{eq:rhogammadef}
	\varrho(x)=\frac{p(1-x)}{x(1-p)},
	\quad
	\varrho=\varrho(r)=\frac{p(1-r)}{r(1-p)}\in(0,1),
	\quad
	\gamma(w)=\frac{(w+1)\,r}{r-p},
\end{equation}
supplemented by $\varrho(x)<1\iff x>p$ and, since
$1-\varrho=(r-p)/(r(1-p))$, $(w+1)/(1-\varrho)=(1-p)\gamma(w)$.

\begin{lemma}[Pointwise two-term ratio expansion]\label{lem:ratioexp}
	Fix $0<p<1$, a scale $x\in(p,1)$, and a point $w\in\C\setminus\Nzero$ (in particular
	$w=\alpha<0$ or any fixed $w\notin\Nzero$). Let $M\to\infty$ and $s=s(M)\to\infty$
	with $s/M\to x$, and replace $r$ by $x$ in \eqref{eq:rhogammadef}. Then
	$\KrM{M}{s-1}(w)\ne0$ for large $M$ and
	\begin{equation}\label{eq:ratioexp}
		\zeta_{s}(w)=-(1-p)\bigl(s-\gamma(w)\bigr)+o(1),
		\quad\text{equivalently}\quad
		g_{s}(w)\longrightarrow (1-p)\,\gamma(w),
		\quad M\to\infty .
	\end{equation}
\end{lemma}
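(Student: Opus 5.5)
The plan is to combine the three-term recurrence \eqref{eq:ttrr} for the family $\KrM{M}{s}$ with the contraction principle of Lemma~\ref{lem:triangular}. Dominici's pointwise limit, Lemma~\ref{lem:dompoint}, applied with $r$ replaced by $x$ (legitimate since $x>p$), supplies only crude a priori control.

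\emph{Step 1: nonvanishing and first-order size.} Since $w\notin\Nzero$, we have $1/\Gamma(-w)\neq0$. Also $(1-p/x)^{w+1}\neq0$ because $x>p$. Hence $\varphi_K(w)\neq0$.

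Lemma~\ref{lem:dompoint} applies along any index sequence $m$ with $m/M\to x$. For such $m$ it gives $\theta^{[M]}_{m}(w)\KrM{M}{m}(w)\to\varphi_K(w)\ne0$, and in particular $\KrM{M}{m-1}(w)\neq0$ for large $M$. Dividing two consecutive limits and using \eqref{eq:kapparatio} yields
\[
\zeta_m(w)=-(1-p)(m-1-w)\bigl(1+o(1)\bigr),
\qquad\text{i.e.}\qquad g_m(w)=o(m).
\]
I will need this uniformly on a backward window $s-h<m\le s$ with $h=h(M)\to\infty$ and $h=o(M)$. I take $h=\lfloor\sqrt M\rfloor$. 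Uniformity follows from the usual subsequence argument: if $\sup_{s-h<m\le s}|g_m|/m$ did not tend to $0$, pick the worst $m=m(M)$ in each window. Since $m/M\to x$ along that sequence as well, this contradicts the pointwise limit. The same argument gives nonvanishing of $\KrM{M}{m}(w)$ on the whole window.

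\emph{Step 2: the affine recursion for $g$.} With $\alpha_m=pM+m(1-2p)$ and $\beta_m=mp(1-p)(M-m+1)$, the recurrence reads $\zeta_{m+1}=w-\alpha_m-\beta_m/\zeta_m$. I substitute $\zeta_m=-(1-p)m+g_m$ and expand
\[
\frac{1}{\zeta_m}=-\frac{1}{(1-p)m}\Bigl(1+\frac{g_m}{(1-p)m}+\frac{g_m^{2}}{(1-p)^2m^{2}}\cdot\frac{1}{1-g_m/((1-p)m)}\Bigr).
\]
The linear-in-$m$ and $M$ terms cancel exactly, and one should obtain the exact identity
\[
g_{m+1}=(w+1)+\nu_{m}\,g_m,\qquad
\nu_m=\varrho_m\Bigl(1+\frac{g_m}{(1-p)m-g_m}\Bigr),\qquad
\varrho_m=\frac{p(M-m+1)}{(1-p)m}.
\]
On the window, $\varrho_m\to\varrho(x)$ uniformly, and $g_m/m\to0$ uniformly by Step 1. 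Hence $\nu_m\to\varrho(x)<1$ uniformly, and $|\nu_m|\le q$ for some fixed $q\in(\varrho(x),1)$ once $M$ is large.

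\emph{Step 3: contraction.} I apply Lemma~\ref{lem:triangular} with $a_M=s-h$, $b_M=s$, $f\equiv w+1$ and $\nu=\varrho(x)$. The boundary term is $q^{h}|g_{s-h}|\le q^{\sqrt M}\,O(M)\to0$, using the trivial bound from Step 1. The lemma then gives
\[
g_s(w)\to\frac{w+1}{1-\varrho(x)}=(1-p)\gamma(w),
\]
which is \eqref{eq:ratioexp}.

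The main obstacle is Step 1's uniform a priori bound $g_m=o(m)$ on the window. It is needed both to make the nonlinear remainder harmless, which I do by absorbing it into the multiplier $\nu_m$, and to control the boundary term. Since the paper only allows pointwise use of Dominici's formula, the subsequence/worst-index argument is the point requiring care. Its validity rests on the fact that Lemma~\ref{lem:dompoint} holds along arbitrary sequences $(m,M)$ with $m/M\to x$.
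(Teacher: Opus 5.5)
Your proof is correct. It uses the same ingredients as the paper: Dominici's limit only for the crude bound $g_m=o(m)$, the exact affine recursion from the three-term recurrence, and Lemma~\ref{lem:triangular}. Your identity $g_{m+1}=(w+1)+\nu_m g_m$ with $\nu_m=\varrho_m/(1-u_m)$, $u_m=g_m/((1-p)m)$, is exactly \eqref{eq:gaffine} rewritten: in the paper's indexing, $\varrho_s g_{s-1}+R_s=p(M-s+2)\,u_{s-1}/(1-u_{s-1})=\nu_{s-1}g_{s-1}$.

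The difference lies in where the quadratic remainder goes. The paper keeps $R_s$ in the forcing term. Since $R_s=\varepsilon_{s-1}g_{s-1}$ tends to zero only once $g$ is known to be bounded, the paper first has to:
\begin{itemize}
\item pass to a macroscopic window $\lfloor (x-\eta)M\rfloor<m\le s$;
\item prove $g_m=o(m)$ uniformly there, which invokes Dominici's limit at every ratio in $[x-\eta,x]$;
\item iterate $|g_m|\le |w+1|+q|g_{m-1}|$ to obtain boundedness;
\item only then apply Lemma~\ref{lem:triangular} on the terminal $\sqrt M$ window.
\end{itemize}

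You absorb the remainder into the multiplier instead. The forcing is then exactly the constant $w+1$, and $\nu_m\to\varrho(x)$ uniformly needs only $u_m\to0$ uniformly. So a single window of length $\lfloor\sqrt M\rfloor$ suffices, the crude boundary bound $|g_{s-h}|=o(M)$ is killed by $q^{\sqrt M}$, and Dominici's limit is used only at the ratio $x$ itself. For this lemma your route is shorter and uses strictly less input. The paper's two-window template is the one its later arguments (Lemma~\ref{lem:massratio}, Theorem~\ref{thm:mainMH}) rely on, because there the boundary values are only stated as $\exp(o(N))$; here it is not needed.

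One bookkeeping point: the first step of your iteration uses $\nu_{s-h}$, and hence $g_{s-h}$ and $\KrM{M}{s-h-1}(w)\neq0$. State the uniform bound and the nonvanishing on $s-h\le m\le s$ rather than $s-h<m\le s$; the same subsequence argument covers this.
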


\begin{proof}
	Since $w\notin\Nzero$ we have $\varphi_{K}(w)\ne0$, and by Lemma~\ref{lem:dompoint}
	(applied along $s/M\to x$, and along $(s-1)/M\to x$)
	$\theta^{[M]}_{s}(w)\KrM{M}{s}(w)\to\varphi_{K}(w)$ and
	$\theta^{[M]}_{s-1}(w)\KrM{M}{s-1}(w)\to\varphi_{K}(w)$; in particular
	$\KrM{M}{s-1}(w)\ne0$ for large $M$, so $\zeta_{s}(w)$ is defined. Using the ratio
	identity \eqref{eq:kapparatio} (which reads
	$\theta^{[M]}_{s-1}(w)/\theta^{[M]}_{s}(w)=-(1-p)(s-1-w)$) gives the a~priori leading
	order
	\begin{equation}\label{eq:zetacrude}
		\zeta_{s}(w)
		=\frac{\theta^{[M]}_{s-1}(w)}{\theta^{[M]}_{s}(w)}\cdot
		\frac{\theta^{[M]}_{s}(w)\KrM{M}{s}(w)}{\theta^{[M]}_{s-1}(w)\KrM{M}{s-1}(w)}
		=-(1-p)(s-1-w)\bigl(1+o(1)\bigr),
	\end{equation}
	hence $g_{s}(w)=\zeta_{s}(w)+(1-p)s=(1-p)(1+w)+o(s)=o(s)$.

	We now improve $o(s)$ to a finite limit through the recurrence, keeping the length $M$
	fixed. Divide the three-term
	recurrence \eqref{eq:ttrr} for the pair $(p,M)$ at $x=w$ by $\KrM{M}{s-1}(w)$ to obtain
	the exact Riccati relation
	\begin{equation}\label{eq:riccati}
		\begin{cases}
			\dst\zeta_{s}(w)=(w-a_{s-1})-\frac{b_{s-1}}{\zeta_{s-1}(w)},\\\\
			\dst a_{s-1}=p(M-s+1)+(s-1)(1-p),\\\\
			\dst b_{s-1}=(s-1)\,p(1-p)(M-s+2),
		\end{cases}
	\end{equation}
	the coefficients being $\alpha_{s-1},\beta_{s-1}$ of Table~\ref{tab:kraw} with $N=M$.
	Substitute $\zeta_{s-1}(w)=-(1-p)(s-1)+g_{s-1}(w)$ and write
	$u_{s-1}=g_{s-1}(w)/((1-p)(s-1))$, which tends to $0$ since $g_{s-1}=o(s)$. Then, using
	$-b_{s-1}/\zeta_{s-1}=p(M-s+2)/(1-u_{s-1})
	=p(M-s+2)\bigl(1+u_{s-1}+u_{s-1}^{2}/(1-u_{s-1})\bigr)$ and the identity
	$w-a_{s-1}=-(1-p)s+\bigl[w+1-p(M-s+2)\bigr]$, the two occurrences of $p(M-s+2)$ cancel
	and \eqref{eq:riccati} becomes the \emph{exact} scalar recursion
	\begin{equation}\label{eq:gaffine}
		g_{s}(w)=(w+1)+\varrho_{s}\,g_{s-1}(w)+R_{s},
		\quad
		\varrho_{s}=\frac{p(M-s+2)}{(1-p)(s-1)},
		\quad
		R_{s}=\frac{p(M-s+2)\,u_{s-1}^{2}}{1-u_{s-1}} .
	\end{equation}
	The symbols in \eqref{eq:gaffine} are now understood as $g_{m,M}$,
	$\varrho_{m,M}$ and $R_{m,M}$.  Choose $\eta>0$ with $p<x-2\eta$, put
	$a_M=\lfloor(x-\eta)M\rfloor$ and $b_M=s(M)$.  On the macroscopic window
	$a_M<m\le b_M$ the explicit multiplier is bounded by a constant
	$\bar\varrho<1$.  Moreover, the pointwise statement \eqref{eq:zetacrude} along every
	index/length sequence, combined with a subsequence argument, gives the uniform estimate
	\[
		\max_{a_M\le m\le b_M}\frac{|g_{m,M}(w)|}{m}\longrightarrow0.
	\]
	Indeed, failure would produce a sequence in this compact ratio interval and then a
	subsequence with $m/M$ convergent, contradicting \eqref{eq:zetacrude} at that limiting
	ratio.  The bound $\varrho_{m,M}\le\bar\varrho$ gives
	$p(M-s+2)/\bigl((1-p)^{2}(s-1)^{2}\bigr)=\varrho_{s}/((1-p)(s-1))\le\bar\varrho/((1-p)(s-1))$,
	so that, for large $s$ (where $|1-u_{s-1}|\ge\tfrac12$),
	\begin{equation}\label{eq:Rsbound}
		\begin{cases}
			|R_{m,M}|\dst\le\frac{2\bar\varrho}{(1-p)(m-1)}\,|g_{m-1,M}(w)|^{2}
			=\varepsilon_{m-1,M}\,|g_{m-1,M}(w)|,\\\\
			\varepsilon_{m-1,M}\dst=\frac{2\bar\varrho\,|g_{m-1,M}(w)|}{(1-p)(m-1)}.
		\end{cases}
	\end{equation}
	Uniformly on the window, $\varepsilon_{m-1,M}=o(1)$.  Hence, for large $M$,
	\[
		|g_{m,M}|\le |w+1|+q|g_{m-1,M}|,
		\quad q=\frac{1+\bar\varrho}{2}<1.
	\]
	Iteration from $a_M$ to $m$ gives
	$|g_{m,M}|\le |w+1|/(1-q)+q^{m-a_M}|g_{a_M,M}|$.  Since
	$g_{a_M,M}=o(M)$ and $b_M-a_M\asymp M$, this proves uniform boundedness on the
	terminal half of the macroscopic window.

	Finally take $\ell_M=\lfloor\sqrt M\rfloor$ and apply
	Lemma~\ref{lem:triangular} on $b_M-\ell_M<m\le b_M$.  There
	$m/M\to x$ uniformly, so $\varrho_{m,M}\to\varrho(x)$ uniformly; the boundedness just proved
	and \eqref{eq:Rsbound} give $R_{m,M}\to0$ uniformly; and
	$q^{\ell_M}|g_{b_M-\ell_M,M}|\to0$.  The lemma therefore yields
	$g_{s,M}(w)\to(w+1)/(1-\varrho(x))=(1-p)\gamma(w)$, which is
	\eqref{eq:ratioexp}.  Notice that no diagonal term has been treated as the predecessor
	of another diagonal term.
\end{proof}

\begin{remark}\label{rem:onlypointwise}
	The proof uses \eqref{eq:classicalMHK} only through the a~priori bound
	\eqref{eq:zetacrude}, i.e.\ at the single point $w$ and at the leading order $o(s)$; the
	finite second term $(1-p)\gamma(w)$ is produced entirely by the recurrence
	\eqref{eq:gaffine}. No uniformity in $w$ and no rate in $s$ are used or asserted. The
	contraction constant must be
	$\varrho=p(1-r)/(r(1-p))<1$, and $\varrho<1$ is \emph{equivalent} to $r>p$: this is the
	analytic origin of the reciprocal-Gamma regime.
\end{remark}

\subsection{The Sobolev correction as a scalar affine recursion}
\label{subsec:recursion}

For the rest of this subsection $N$ is fixed and is suppressed from the notation. Write
$S_{s}=\dif^{j}\Kr{s}(\alpha)$ for the exterior difference and define the weights
\begin{equation}\label{eq:tm}
	t_{m}=\frac{S_{m}^{2}}{d_{m}^{2}}
	=\frac{\ff{m}{j}^{2}\,\KrM{N-j}{m-j}(\alpha)^{2}}{d_{m}^{2}}\ge0 ,
\end{equation}
their total mass, which is the diagonal kernel, and the Sobolev weights
\begin{equation}\label{eq:Tn}
	T_{n}=\sum_{m=0}^{n-1}t_{m}=\Ker^{(j,j)}_{n-1}(\alpha,\alpha)\ge0 ,
	\quad
	\delta_{n}=1+\lambda T_{n}\ge1 ,
	\quad
	w_{n}=\frac{\lambda T_{n}}{\delta_{n}}\in[0,1),
\end{equation}
so that $\delta_{n}^{-1}=1-w_{n}$. For $n\le j$ all terms of the sum vanish, so $T_n=0$,
$\Sob n=\Kr n$, and the normalisations below are not used; for $n\ge j+1$, $t_j>0$ and
hence $T_n>0$. Normalise the partial-difference kernel by its total mass,
\begin{equation}\label{eq:Hdef}
	H_{n}(z)=\frac{\Ker^{(0,j)}_{n-1}(z,\alpha)}{T_{n}}
	=\frac{1}{T_{n}}\sum_{m=0}^{n-1}\frac{S_{m}\,\Kr{m}(z)}{d_{m}^{2}} ,
\end{equation}
for $n\ge j+1$, and define the \emph{scaled Sobolev correction}
\begin{equation}\label{eq:Xidef}
	\Xi_{n}(z)=\Kr{n}(z)-S_{n}\,H_{n}(z) .
\end{equation}
Since $\lambda A_{n}=\lambda S_{n}/\delta_{n}=(w_{n}/T_{n})\,S_{n}$ by \eqref{eq:An} and
\eqref{eq:Tn}, the connection formula \eqref{eq:connection} reads
$\Sob{n}(z)=\Kr{n}(z)-\lambda A_{n}\Ker^{(0,j)}_{n-1}(z,\alpha)
=\Kr{n}(z)-w_{n}S_{n}H_{n}(z)$, that is, \eqref{eq:SobXi}. Thus the whole problem is to
find the scaled limit of $\Xi_{n}$. The next lemma shows that $\Xi_{n}$ satisfies an
exact one-step recursion driven only by the top bracket
$Q_{n,n-1}(z)=\Kr{n}(z)-\frac{S_{n}}{S_{n-1}}\Kr{n-1}(z)$.

\begin{lemma}[Exact fixed-$N$ recursion for the correction]\label{lem:xirec}
	For $j+1\le n\le N$,
	\begin{equation}\label{eq:xirec}
		\Xi_{n}(z)=Q_{n,n-1}(z)+\mu_{n}\,\Xi_{n-1}(z),
		\quad
		\mu_{n}=\frac{S_{n}}{S_{n-1}}\cdot\frac{T_{n-1}}{T_{n}} .
	\end{equation}
\end{lemma}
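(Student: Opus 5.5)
The recursion is a purely algebraic identity at fixed $N$. I plan to prove it by feeding the top-split kernel recursion \eqref{eq:kernelsplit} into the definition \eqref{eq:Xidef} of $\Xi_{n}$. I would then eliminate the lower-degree kernel using $\Xi_{n-1}$ itself.

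First I check that every quantity is well defined. For $m\ge j$, the forward-shift identity \eqref{eq:fwdK} gives $S_{m}=\ff{m}{j}\KrM{N-j}{m-j}(\alpha)$. Here $\ff{m}{j}\neq0$. Moreover $\KrM{N-j}{m-j}(\alpha)\neq0$, because the zeros of a Krawtchouk polynomial orthogonal on $\{0,\dots,N-j\}$ lie in $(0,N-j)$, while $\alpha<0$. Hence $S_{n-1}\neq0$ for $n\ge j+1$, so the quotient $S_{n}/S_{n-1}$ in $Q_{n,n-1}$ and in $\mu_{n}$ makes sense. Also $T_{n}\ge t_{j}>0$ for $n\ge j+1$.

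For $n\ge j+2$, so that $H_{n-1}$ is defined, I write \eqref{eq:kernelsplit} as $T_{n}H_{n}=T_{n-1}H_{n-1}+S_{n-1}\Kr{n-1}/d_{n-1}^{2}$. From \eqref{eq:Xidef} at degree $n-1$ I get $H_{n-1}=(\Kr{n-1}-\Xi_{n-1})/S_{n-1}$. Substituting this and multiplying by $S_{n}/T_{n}$ gives
\[
S_{n}H_{n}=\mu_{n}\bigl(\Kr{n-1}-\Xi_{n-1}\bigr)+\frac{S_{n}}{S_{n-1}}\,\frac{t_{n-1}}{T_{n}}\,\Kr{n-1},
\]
where I used $S_{n-1}^{2}/d_{n-1}^{2}=t_{n-1}$ from \eqref{eq:tm}. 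The coefficient of $\Kr{n-1}$ collapses to $\frac{S_{n}}{S_{n-1}}\cdot\frac{T_{n-1}+t_{n-1}}{T_{n}}=\frac{S_{n}}{S_{n-1}}$, since $T_{n}=T_{n-1}+t_{n-1}$ by \eqref{eq:Tn}. Therefore $\Xi_{n}=\Kr{n}-S_{n}H_{n}=Q_{n,n-1}+\mu_{n}\Xi_{n-1}$, which is \eqref{eq:xirec}.

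The only delicate point is the initial case $n=j+1$, where $T_{j}=0$ and $\Xi_{j}$ is not defined through $H_{j}$. Here $\mu_{j+1}=0$, so the claim should be read as $\Xi_{j+1}=Q_{j+1,j}$ with the second term absent. I verify this directly: $T_{j+1}=t_{j}$, and only $m=j$ contributes to the sum in $H_{j+1}$. Hence $H_{j+1}=S_{j}\Kr{j}/(d_{j}^{2}t_{j})=\Kr{j}/S_{j}$, and so $\Xi_{j+1}=\Kr{j+1}-\frac{S_{j+1}}{S_{j}}\Kr{j}=Q_{j+1,j}$. I expect no genuine obstacle beyond keeping this degenerate boundary case and the nonvanishing of $S_{n-1}$ explicit.
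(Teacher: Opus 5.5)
Your proposal is correct and is essentially the paper's proof. Both split the kernel at its top term, substitute $H_{n-1}=(\Kr{n-1}-\Xi_{n-1})/S_{n-1}$, collapse the coefficient of $\Kr{n-1}$ via $T_{n}=T_{n-1}+t_{n-1}$, and verify the boundary case $n=j+1$ (where $\mu_{j+1}=0$) directly. Your explicit check that $S_{n-1}\neq0$, using the fact that the zeros of $\KrM{N-j}{n-1-j}$ lie in $(0,N-j)$ while $\alpha<0$, fills in a point the paper leaves implicit.
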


\begin{proof}
	For $n=j+1$, $T_j=0$, $\mu_{j+1}=0$, and the asserted identity follows directly.
	For $n\ge j+2$, split the kernel sum \eqref{eq:Hdef} at its top term:
	$\Ker^{(0,j)}_{n-1}(z,\alpha)=\Ker^{(0,j)}_{n-2}(z,\alpha)+S_{n-1}\Kr{n-1}(z)/d_{n-1}^{2}
	=T_{n-1}H_{n-1}(z)+S_{n-1}\Kr{n-1}(z)/d_{n-1}^{2}$, and $T_{n}=T_{n-1}+t_{n-1}$ with
	$t_{n-1}=S_{n-1}^{2}/d_{n-1}^{2}$. Hence, using $H_{n-1}(z)=(\Kr{n-1}(z)-\Xi_{n-1}(z))/S_{n-1}$
	from \eqref{eq:Xidef},
	\begin{align*}
		S_{n}H_{n}(z)
		&=\frac{S_{n}}{T_{n}}\Bigl[T_{n-1}\,\frac{\Kr{n-1}(z)-\Xi_{n-1}(z)}{S_{n-1}}
		+\frac{S_{n-1}}{d_{n-1}^{2}}\Kr{n-1}(z)\Bigr]\\
		&=\frac{S_{n}}{S_{n-1}T_{n}}\bigl[(T_{n-1}+t_{n-1})\Kr{n-1}(z)-T_{n-1}\Xi_{n-1}(z)\bigr]
		=\frac{S_{n}}{S_{n-1}}\Kr{n-1}(z)-\mu_{n}\Xi_{n-1}(z),
	\end{align*}
	since $T_{n-1}+t_{n-1}=T_{n}$ and $\mu_{n}=(S_{n}/S_{n-1})(T_{n-1}/T_{n})$. Subtracting
	from $\Kr{n}(z)$ gives $\Xi_{n}(z)=\Kr{n}(z)-S_{n}H_{n}(z)
	=\Kr{n}(z)-\tfrac{S_{n}}{S_{n-1}}\Kr{n-1}(z)+\mu_{n}\Xi_{n-1}(z)$, which is
	\eqref{eq:xirec}.
\end{proof}

The scaled correction $Y_{n,N}(z)=n\,\theta^{[N]}_{n}(z)\,\Xi_{n,N}(z)$ then obeys,
with $N$ fixed, the affine recursion
\begin{equation}\label{eq:Yrec}
	Y_{n}(z)=n\,\theta_{n}(z)\,Q_{n,n-1}(z)+\nu_{n}\,Y_{n-1}(z),
	\quad
	\nu_{n}=\mu_{n}\,\frac{n\,\theta_{n}(z)}{(n-1)\,\theta_{n-1}(z)},
\end{equation}
obtained from \eqref{eq:xirec} by multiplying by $n\theta_{n}(z)$ and using
$\Xi_{n-1}(z)=Y_{n-1}(z)/((n-1)\theta_{n-1}(z))$. We control \eqref{eq:Yrec} with a
triangular contraction principle above, which we also apply to the mass ratio.

The boundary estimates needed for those applications are recorded explicitly next. Here
$\log^{+}u=\max(0,\log u)$.

\begin{lemma}[Subexponential boundary control]\label{lem:boundary}
	Let $m=m(N)$ and $m/N\to x\in(p,1)$. Then
	\begin{equation}\label{eq:t-rate}
		\log t_{m,N}=N\Psi_p(x)+o(N),
		\quad
		\Psi_p(x)=(x-2)\log(1-p)-x\log p
		+x\log x+(1-x)\log(1-x).
	\end{equation}
	Moreover, for every fixed $z\in\C\setminus\Nzero$,
	\begin{equation}\label{eq:boundary-control}
		\log^{+}\!\frac{T_{m,N}}{t_{m-1,N}}=o(N),
		\quad
		\log^{+}|Y_{m,N}(z)|=o(N).
	\end{equation}
	The estimates hold uniformly when $m/N$ stays in a compact subinterval of $(p,1)$.
\end{lemma}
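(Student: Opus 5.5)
The plan is to derive the rate \eqref{eq:t-rate} from Dominici's pointwise limit at the exterior point, and then to reduce the two estimates in \eqref{eq:boundary-control} to exponential upper bounds on the individual weights and kernels that are uniform in the degree. For the first step I write, by \eqref{eq:tm} and \eqref{eq:fwdK}, $t_{m,N}=\ff{m}{j}^{2}\,\KrM{N-j}{m-j}(\alpha)^{2}/d_{m}^{2}$. I then apply Lemma~\ref{lem:dompoint} at the fixed point $\alpha\notin\Nzero$ with $M=N-j$ and $s=m-j$, so that $s/M\to x>p$. Since $\varphi_{K}(\alpha)\neq0$, this gives
\[
\log\bigl|\KrM{N-j}{m-j}(\alpha)\bigr|=(m-N)\log(1-p)+\log\Gamma(m-j-\alpha)+O(1).
\]
For $d_{m}^{2}$ I use Table~\ref{tab:kraw} together with Stirling's formula, $\log\binom{N}{m}=-N[x\log x+(1-x)\log(1-x)]+o(N)$, and $\log\ff{m}{j}=O(\log N)$. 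In $\log t_{m,N}$ the terms $2\log\Gamma(m-j-\alpha)$ and $2\log m!$ agree up to $O(\log N)$. What remains is
\[
2(m-N)\log(1-p)-m\log p-m\log(1-p)+N[x\log x+(1-x)\log(1-x)]+o(N),
\]
which is exactly $N\Psi_{p}(x)+o(N)$. The claimed uniformity for $m/N$ in a compact subinterval of $(p,1)$ follows from the same subsequence argument used in the proof of Lemma~\ref{lem:ratioexp}. If uniformity failed, there would be a sequence along which the error stays at least $\epsilon N$. Extracting a subsequence with $m/N$ convergent and applying the pointwise statement at the limiting ratio gives a contradiction.

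For the mass ratio, I bound $\log T_{m,N}\le\log N+\max_{k<m}\log t_{k,N}$. It therefore suffices to show that $\log t_{k,N}\le N\Psi_{p}(x)+o(N)$ for all $k<m$, uniformly, and then to subtract the rate of $t_{m-1,N}$, which is $N\Psi_{p}(x)+o(N)$ by the first step. For $k/N$ in $[p+\eta,x]$ this is immediate, since $\Psi_{p}'(y)=-\log\varrho(y)>0$ on $(p,1)$. For small $k$, and for $k$ below or near $pN$, Dominici's reciprocal-Gamma formula is not available. There I plan to use the sign structure at $\alpha<0$. In the terminating series \eqref{eq:Khyp}, with $-\alpha>0$ and $0\le i\le k\le N$, every summand
\[
\frac{(-k)_{i}(-\alpha)_{i}}{(-N)_{i}\,i!}\,p^{-i}
\]
is nonnegative. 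The sum is therefore comparable, up to a polynomial factor, to its largest term. This yields an explicit Laplace-type rate $\widetilde\Psi_{p}(y)$ for $\frac1N\log t_{\lfloor yN\rfloor,N}$, valid for all $y\in[0,1)$ and uniform in $y$.

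The main obstacle will be to check that $\widetilde\Psi_{p}(y)\le\Psi_{p}(x)$ for every $y\le x$. Near $y=0$ the value $-2\log(1-p)$ competes with $\Psi_{p}(x)$, so this inequality requires care, and the regime $y\le p$ is not covered by the contraction. If the direct comparison of rates is not clean, my first fallback is a monotonicity argument: show that $t_{k}/t_{k-1}\ge1$ for $k$ above the point where the maximal term in the series changes. I would do this using the Riccati relation \eqref{eq:riccati} at $w=\alpha$, and then handle the initial segment by the explicit bound.

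For $Y_{m,N}(z)=m\,\theta_{m}(z)\,\Xi_{m,N}(z)$, the summand $m\,\theta_{m}(z)\Kr{m}(z)$ is bounded by Lemma~\ref{lem:dompoint}. For the other summand $S_{m}H_{m}(z)$, I apply the Cauchy--Schwarz inequality to the kernel:
\[
\bigl|\Ker^{(0,j)}_{m-1}(z,\alpha)\bigr|\le\Ker_{m-1}(z,\bar z)^{1/2}\,T_{m}^{1/2},
\]
with $\Ker_{m-1}(z,\bar z)=\sum_{k<m}|\Kr{k}(z)|^{2}/d_{k}^{2}$. This gives
\[
|S_{m}H_{m}(z)|\le d_{m}\Bigl(\frac{t_{m}}{T_{m}}\Bigr)^{1/2}\Ker_{m-1}(z,\bar z)^{1/2}.
\]
The factor $t_{m}/T_{m}$ is bounded by $t_{m}/t_{m-1}$, which is $e^{o(N)}$ by \eqref{eq:t-rate}. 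The remaining factor $\theta_{m}(z)\,d_{m}\,\Ker_{m-1}(z,\bar z)^{1/2}$ is handled as before. I bound each $|\Kr{k}(z)|$ crudely by the absolute series (with $|(-z)_{i}|\le(|z|)_{i}$), which produces an exponential rate. I then check, as in the mass-ratio step, that this rate does not exceed the rate of $1/|\theta_{m}(z)d_{m}|$ at $x$. This comparison is the same kind of rate inequality as above and will be treated together with it.
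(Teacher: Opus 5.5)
Your derivation of \eqref{eq:t-rate} matches the paper's Part~2: Dominici's limit at $\alpha$ with $M=N-j$, $s=m-j$, then Stirling, then a subsequence argument for uniformity. The reduction $T_{m,N}\le m\max_{k<m}t_{k,N}$ is also fine. The gap is the step you call the main obstacle. You leave the rate below $p$ uncomputed, and your diagnosis of it is wrong.

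The value $-2\log(1-p)$ is $\Psi_p(0)$, i.e.\ Dominici's rate extrapolated below $p$. There the asymptotics $|K^{p,M}_s(\alpha)|\approx(1-p)^{s-M}\Gamma(s-\alpha)$ are false. Carrying out your own positivity idea, which is the paper's Part~1, settles it:
\begin{itemize}
\item Write $|K^{p,M}_s(\alpha)|=p^s[M]_s\Phi$ with $\Phi=\sum_i\frac{[s]_i}{[M]_i}\frac{(-\alpha)_i}{i!\,p^i}$.
\item For $k/N\le p$ one has $s/(Mp)\le1$, so every term is $O(i^{-\alpha-1})$ and $\log\Phi=O(\log N)$.
\item Stirling then gives $\log t_{k,N}=N\mathcal{R}_p(k/N)+O(\log N)$, with $\mathcal{R}_p(y)=y\log(p/y)-y\log(1-p)-(1-y)\log(1-y)$.
\end{itemize}
This rate vanishes at $y=0$; for fixed $k$, $t_{k,N}$ is only polynomial in $N$. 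It increases on $(0,p)$, since $\mathcal{R}_p'(y)=\log\frac{p(1-y)}{y(1-p)}>0$, and it reaches $\mathcal{R}_p(p)=-\log(1-p)=\Psi_p(p)$. So the whole tail is at most $N e^{N\Psi_p(p)+O(\log N)}$. That is exponentially smaller than $t_{m-1,N}$, by the uniform gap $\Psi_p(x_0)-\Psi_p(p)>0$, and nothing competes near $y=0$.

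Your fallback would not have helped anyway. The Riccati recursion \eqref{eq:riccati} contracts only for $s/M>p$. Near $k/N=p$ the ratio $t_k/t_{k-1}$ tends to $1$, so exact monotonicity cannot be read off from asymptotics there.

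The genuinely delicate region is the window $p<k/N<p+\eta$, where Dominici's limit is available only on compact subintervals of $(p,1)$. Your max-term Laplace estimate covers this window if you carry it out uniformly in $y$:
\begin{itemize}
\item The maximising index is $i^*\approx N(y-p)/(1-p)$.
\item An envelope computation shows the resulting rate has derivative $\log\frac{y(1-p)}{p(1-y)}$ and coincides with $\Psi_p$ on $[p,1)$.
\item So the rate is continuous and increasing across $p$, and it suffices to choose $\eta$ with $\Psi_p(p+\eta)<\Psi_p(x_0)$.
\end{itemize}
The paper handles the same window with the crude bound $\log\Phi\le Nc_1\varepsilon/2+O(\log N)$ plus a quadratic Taylor bound on $\mathcal{R}_p$ at $p$; the ingredients are the same.

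With this settled, your Cauchy--Schwarz route for $Y_{m,N}$ works. The bound $|(-z)_i|\le(|z|)_i$ dominates $|\Kr{k}(z)|$ by a positive series of the same type. Its exponential rate is again $\mathcal{R}_p$ below $p$ and $\Psi_p$ above, because the Pochhammer factors contribute only polynomially. Combined with $|\theta_m(z)|\,d_m=e^{-N\Psi_p(x)/2+o(N)}$, this gives the bound.

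The paper's route for $Y_{m,N}$ is more economical. It writes $S_mH_m(z)=\sum_k(t_k/T_m)\,S_m\Kr{k}(z)/S_k$ as a convex combination. It bounds $|\Kr{k}(z)|/|S_k|$ polynomially by termwise comparison of the positive series: first $z$ against $\alpha$, then $(k,N)$ against $(k-j,N-j)$. It bounds $|\theta_m(z)S_m|$ polynomially via \eqref{eq:S-asympt}. The result is a polynomial bound on $Y_{m,N}$ with no exponential-rate bookkeeping and no dependence on the mass-ratio estimate. Your version decouples $z$ from $\alpha$, but it rests on the same rate inequality you left open.
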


\begin{proof}
	The proof consists of two exponential-rate computations, followed by the estimates
	that yield \eqref{eq:boundary-control}. Throughout, $k\ge j$ is an index with
	$k/N$ in one of the two ranges below, and constants may depend on $j,\alpha,p$ (and on
	$z$ in the last step) but not on $N$.

	\emph{1. The rate of $t_{k,N}$ for $k/N\le p$.} By \eqref{eq:fwdK}, with
	$s=k-j$ and $M=N-j$,
	\begin{equation}\label{eq:tail-start}
		S_{k,N}=\ff{k}{j}\,K^{p,M}_{s}(\alpha),
		\quad
		K^{p,M}_{s}(\alpha)=p^{s}(-M)_{s}\,
		\pFq{2}{1}\!\left(\begin{matrix}-s,\,-\alpha\\ -M\end{matrix}\;\middle|\;\frac1p\right).
	\end{equation}
	In the terminating series of \eqref{eq:tail-start} all terms are real and positive
	after the common factor $(-1)^{s}$ is removed, because $\alpha<0$: since
	$(-M)_{s}=(-1)^{s}[M]_{s}$ and $(-s)_{i}/(-M)_{i}=[s]_{i}/[M]_{i}$, one has
	$K^{p,M}_{s}(\alpha)=(-1)^{s}p^{s}[M]_{s}\Phi$ with
	\begin{equation}\label{eq:phi-series}
		\Phi=\sum_{i=0}^{s}\frac{[s]_{i}}{[M]_{i}}\,\frac{(-\alpha)_{i}}{i!\,p^{i}},
	\end{equation}
	and every summand of \eqref{eq:phi-series} is positive. Since
	$[s]_{i}/[M]_{i}=\prod_{l=0}^{i-1}(s-l)/(M-l)\le(s/M)^{i}$ and
	$(-\alpha)_{i}/i!=\Gamma(i-\alpha)/(\Gamma(-\alpha)\Gamma(i+1))\le
	C(\alpha)\,i^{-\alpha-1}$ for $i\ge1$ (recall $\alpha<0$), and since
	$s/(Mp)\le(k-j)/(p(N-j))\le 1$ whenever $k/N\le p$ (equality being possible only in the
	extreme case $j=0$, $k=pN$), each summand is at most $C(\alpha)\,i^{-\alpha-1}$, so that, by the standard integral
	comparison $\sum_{i=1}^{s}i^{-\alpha-1}=O\left(s^{-\alpha}\right)$, which is valid for every
	$\alpha<0$,
	\begin{equation}\label{eq:phi-bound}
		\Phi\le 1+C(\alpha)\sum_{i=1}^{s}i^{-\alpha-1}
		\le C(\alpha)\,s^{-\alpha},
		\quad\text{hence}\quad
		\log\Phi=O(\log N),
	\end{equation}
	uniformly in $k/N\le p$; the constant in \eqref{eq:phi-bound} depends on $\alpha$ only,
	no strict inequality $s<Mp$ being needed (the elementary majorant
	$\sum_{i=0}^{s}(s/(Mp))^{i}\le s+1$ is used instead of a geometric sum). Substituting
	\eqref{eq:tail-start}, \eqref{eq:phi-bound} into
	$t_{k,N}=S_{k,N}^{2}/d_{k,N}^{2}$ and applying Stirling's formula to
	\begin{equation*}
		\log\bigl[p^{s}[M]_{s}\bigr]
		=s\log p+\log\Gamma(M+1)-\log\Gamma(M-s+1),
		\quad
		\log d_{k}^{2}=2\log k!+\log\binom{N}{k}+k\log[p(1-p)],
	\end{equation*}
	one may collect the terms of order $N$; the terms of order $\log N$ depend on
	$j$ and $\alpha$ (through the factors $k-j$, $N-j$ and the gamma-stretch terms of
	Stirling's formula) and are absorbed in the error term. This gives
	\begin{equation}\label{eq:rate-tail}
		\log t_{k,N}=N\,\mathcal{R}_{p}\!\left(\frac{k}{N}\right)+O(\log N),
		\quad
		\mathcal{R}_{p}(\xi)=\xi\log\frac{p}{\xi}-\xi\log(1-p)-(1-\xi)\log(1-\xi),
	\end{equation}
	uniformly for $k/N\le p$. On $(0,p)$ one has
	$\mathcal{R}_{p}'(\xi)=\log\frac{p(1-\xi)}{\xi(1-p)}>0$ and
	$\mathcal{R}_{p}(p)=-\log(1-p)$, so
	\begin{equation}\label{eq:tail-bound}
		\log t_{k,N}\le N\,\mathcal{R}_{p}(p)+O(\log N)=-N\log(1-p)+O(\log N),
		\quad k/N\le p .
	\end{equation}

	\emph{2. The rate of $t_{k,N}$ for $k/N$ in compact subintervals of $(p,1)$.} For each
	fixed $x\in(p,1)$, Lemma~\ref{lem:dompoint} at $w=\alpha$ along
	$(k-j)/(N-j)\to x$ gives the limit
	$\theta^{[N-j]}_{k-j}(\alpha)K^{p,N-j}_{k-j}(\alpha)\to\varphi_{K,x}(\alpha)$, with the
	per-ratio profile
	\begin{equation}\label{eq:phiKx}
		\varphi_{K,x}(\alpha)=\frac{(1-p)^{\alpha}}{(1-p/x)^{\alpha+1}\,\Gamma(-\alpha)},
	\end{equation}
	obtained from \eqref{eq:phiK} by replacing $r$ by $x$; hence
	\begin{equation}\label{eq:S-asympt}
		S_{k,N}=\ff{k}{j}\,(-1)^{k-j}(1-p)^{k-N}
		\Gamma(k-j-\alpha)\bigl(\varphi_{K,x}(\alpha)+o(1)\bigr),
		\quad(k/N\to x).
	\end{equation}
	The map $x\mapsto\varphi_{K,x}(\alpha)$ is continuous on $(p,1)$ and bounded away from
	zero on compact subintervals: for $x$ in a compact $[x_{0},x_{1}]\subseteq(p,1)$ one has
	$0<1-p/x_{0}\le1-p/x\le1-p/x_{1}<1$ and the factor $(1-p/x)^{\alpha+1}$ therefore has positive
	modulus bounded away from $0$ and $\infty$ (in particular, positive bounds for
	$1-p/x$, not merely for $x^{-1}$, control its modulus since $\alpha+1<0$ is allowed),
	while $\alpha\notin\Nzero$ gives $\Gamma(-\alpha)\ne0$. The subsequence argument (a violating sequence
	would have a convergent ratio subsequence, contradicting \eqref{eq:S-asympt} at its
	limit) provides the uniformity of the $o(1)$-term in \eqref{eq:S-asympt}, hence of
	\eqref{eq:rate-active} below, for $k/N$ in compact subintervals of $(p,1)$.
	Substituting \eqref{eq:S-asympt} in
	$t_{k,N}=S_{k,N}^{2}/d_{k,N}^{2}$ and using Stirling's formula again,
	\begin{equation}\label{eq:rate-active}
		\log t_{k,N}=N\,\Psi_{p}\!\left(\frac{k}{N}\right)+O(\log N),
	\end{equation}
	uniformly on compact subintervals of $(p,1)$, which proves \eqref{eq:t-rate}; moreover
	$\Psi_p'(x)=\log\frac{x(1-p)}{p(1-x)}>0$ for $x>p$, and $\Psi_p(p)=-\log(1-p)$.

	\emph{3. The cumulative bound.} Let $m/N$ stay in a compact subinterval
	$[x_0,x_1]\subseteq(p,1)$; put $x_{-}=(m-1)/N$, so that
	$x_{-}\ge x_0-1/N$, and let
	$\sigma_{0}=\Psi_{p}(x_{0})-\Psi_{p}(p)>0$. Choose $\varepsilon\in(0,(x_{0}-p)/2)$
	such that
	\begin{equation}\label{eq:eps-choice}
		c_{1}\,\varepsilon+\kappa_{0}\,\varepsilon^{2}\le\frac{\sigma_{0}}{4},
		\quad
		c_{1}=\frac{4x_{1}}{p},\quad
		\kappa_{0}=\frac{1}{p(1-p)},
	\end{equation}
	which is possible because the left-hand side tends to $0$ as
	$\varepsilon\downarrow0$, and set $p_{\varepsilon}=p+\varepsilon$, so that
	$p_{\varepsilon}<(p+x_{0})/2<(1+p)/2$. Here $\kappa_{0}$ is legitimate since, for
	$\xi\in[p,p_{\varepsilon}]$,
	$|\mathcal{R}_{p}''(\xi)|=1/\xi+1/(1-\xi)\le1/p+1/(1-p_{\varepsilon})
	\le1/p+2/(1-p)\le2/[p(1-p)]=2\kappa_{0}$, so that the Taylor estimate with the
	factor $1/2$ gives precisely the quadratic term of
	\begin{equation}\label{eq:radius}
		\mathcal{R}_{p}(\xi)\le\Psi_{p}(p)+\kappa_{0}(\xi-p)^{2},
		\quad(\xi\in[p,p_{\varepsilon}]);
	\end{equation}
	the logarithmic corrections and the shifts $k-j$, $N-j$ are absorbed in the
	$O(\log N)$ terms below.
	We split the sum $T_{m,N}=\sum_{k=j}^{m-1}t_{k,N}$ in three ranges.

	\emph{(i) The tail, $k/N\le p$.} By \eqref{eq:tail-bound}, each
	$t_{k,N}\le e^{N\Psi_{p}(p)+O(\log N)}$, so
	\begin{equation}\label{eq:tail-sum}
		\sum_{k/N\le p}t_{k,N}\le N\,e^{N\Psi_{p}(p)+O(\log N)}
		\le N\,e^{N\Psi_{p}(x_{-})-\sigma_{0}N+O(\log N)}
		\le e^{-\sigma_{0}N/2}\,t_{m-1,N}
	\end{equation}
	for large $N$, since $t_{m-1,N}=e^{N\Psi_{p}(x_{-})+O(\log N)}$ by
	\eqref{eq:rate-active} and $\Psi_{p}(x_{-})\ge\Psi_{p}(x_{0})-O(1/N)$.

	\emph{(ii) The transition window, $p<k/N<p_{\varepsilon}$.} The computation of
	Part 1 separates $\log t_{k,N}$ into a gamma part, whose Stirling expansion gives
	$N\mathcal{R}_{p}(k/N)+O(\log N)$ uniformly for $k/N\in[p,x_{1}]$, and the
	hypergeometric part $2\log\Phi$. For $k/N\in[p,p_{\varepsilon}]$ the latter is
	bounded by
	\begin{equation}\label{eq:phi-transition}
		\log\Phi\le N\,\frac{k}{N}\log\frac{k-j}{p(N-j)}
		+O(\log N)\le N\,c_{1}\,\frac{k-pN}{2N}+O(\log N)
		\le \frac{N\,c_{1}\varepsilon}{2}+O(\log N),
	\end{equation}
	the first inequality following from the positivity of \eqref{eq:phi-series} and the
	majorant $\sum_{i=0}^{s}(s/(Mp))^{i}\le(s+1)(s/(Mp))^{s}$ for $(k-j)/(p(N-j))\ge1$
	together with $\log(1+u)\le u$ (for $(k-j)/(p(N-j))\le1$ the estimate being trivial,
	since the sum is then at most $s+1$), and the second from
	$\tfrac{k-j}{p(N-j)}-1\le\tfrac{2(k-pN)}{pN}$ for large $N$. Combining
	\eqref{eq:radius}--\eqref{eq:phi-transition} with the choice \eqref{eq:eps-choice},
	\[
		\log t_{k,N}\le N\bigl[\Psi_{p}(p)+\kappa_{0}\varepsilon^{2}+c_{1}\varepsilon\bigr]
		+O(\log N)\le N\Psi_{p}(x_{-})-\frac{3\sigma_{0}N}{4}+O(\log N),
	\]
	hence $\sum_{p<k/N<p_{\varepsilon}}t_{k,N}\le e^{-\sigma_{0}N/2}t_{m-1,N}$ for large
	$N$.

	\emph{(iii) The remaining window, $p_{\varepsilon}\le k/N\le x_{-}$.} By
	\eqref{eq:rate-active} on the compact subinterval $[p_{\varepsilon},x_{1}]\subseteq(p,1)$
	and the monotonicity of $\Psi_{p}$, each such $t_{k,N}\le
	e^{O(\log N)}t_{m-1,N}$, so this part of the sum is at most
	$m\,e^{O(\log N)}t_{m-1,N}$.

	Combining (i)--(iii),
	$T_{m,N}\le m\,t_{m-1,N}\,e^{O(\log N)}$ and therefore
	$\log^{+}(T_{m,N}/t_{m-1,N})=O(\log N)=o(N)$, uniformly in the compact ratio range
	(the constants in (i)--(iii) depend on $[x_{0},x_{1}]$, $p$, $j$ and $\alpha$ only),
	which is the first estimate of \eqref{eq:boundary-control}.

	\emph{4. The bound for $Y_{m,N}(z)$.} For a fixed $z\in\C\setminus\Nzero$ compare the
	Pochhammer factors of \eqref{eq:Khyp} that contain $z$ and $\alpha$: since
	$(-\alpha)_{i}/i!=\Gamma(i-\alpha)/(\Gamma(-\alpha)\Gamma(i+1))$ and
	$(-z)_{i}/i!=\Gamma(i-z)/(\Gamma(-z)\Gamma(i+1))$, Stirling's formula gives
	$|(-z)_{i}|\le C(z,\alpha)\,i^{M(z,\alpha)}\,(-\alpha)_{i}$ for $i\ge1$, with
	$M(z,\alpha)=\max\{0,\alpha-\mathrm{Re}\,z\}$ (both sides being of fixed power order in
	$i$), so the termwise comparison of the two terminating series in \eqref{eq:Khyp} gives
	\begin{equation}\label{eq:z-alpha-comparison}
		|K^{p,N}_{k}(z)|\le C(z,\alpha)\,k^{M(z,\alpha)}\,|K^{p,N}_{k}(\alpha)|,
		\quad (0\le k\le N).
	\end{equation}
	Moreover, from the exact factorisation
	$p^{k}[N]_{k}=p^{j}[N]_{j}\,p^{k-j}[N-j]_{k-j}$ and the identities
	$[k]_{i}=[k]_{j}[k-j]_{i-j}$, $[N]_{i}=[N]_{j}[N-j]_{i-j}$ and
	$(-\alpha)_{i}=(-\alpha)_{i-j}(|-\alpha|+i-j)_{j}$, the two terminating series
	$\Phi_{k,N}$ (parameters $k,N$) and $\Phi_{k-j,N-j}$ (parameters $k-j,N-j$) of
	\eqref{eq:phi-series} can be compared termwise. With
	$a_{i}=\frac{[k]_{i}}{[N]_{i}}\frac{(-\alpha)_{i}}{i!p^{i}}$ and
	$b_{i}=\frac{[k-j]_{i}}{[N-j]_{i}}\frac{(-\alpha)_{i}}{i!p^{i}}$ one has, for
	$i\le k-j$,
	\begin{equation}\label{eq:ratio-coeff}
		\frac{a_{i}}{b_{i}}
		=\frac{[k]_{i}}{[k-j]_{i}}\,\frac{[N-j]_{i}}{[N]_{i}}
		=\frac{[k]_{j}}{[k-i]_{j}}\,\frac{[N-j]_{i}}{[N]_{i}}
		\le[k]_{j}\le k^{j}\le N^{j},
	\end{equation}
	the first identity following from $[k]_{i}=[k]_{j}[k-j]_{i-j}$ and
	$[k-j]_{i}=[k-j]_{i-j}[k-i]_{j}$, the second factor being at most one and
	$[k-i]_{j}\ge1$; when $i<j$ the bound is immediate since then $[k]_{i}\le k^{i}\le k^{j}$
	and $[k-j]_{i}\ge1$. For the remaining coefficients $k-j<i\le k$, the identities above
	give
	\begin{equation}\label{eq:extra-coeff}
		a_{i}=\frac{[k]_{j}}{[N]_{j}}\,\frac{(|-\alpha|+i-j)_{j}}{[i]_{j}}\,
		p^{-j}\,b_{i-j}
		\le C(j,\alpha,p)\,N^{2j}\,b_{i-j},
	\end{equation}
	since $[k]_{j}\le k^{j}\le N^{j}$, $[N]_{j}\ge1$,
	$(|-\alpha|+i-j)_{j}\le(|-\alpha|+k)^{j}\le C(j,\alpha)N^{j}$ and $[i]_{j}\ge1$.
	The range $j\le k<2j$ is separated here: since $j$ is fixed there are finitely many
	such degrees, and for them each coefficient of \eqref{eq:phi-series} is bounded by a
	constant depending on $j,\alpha,p$ (because $[k]_{i}\le k^{i}\le(2j)^{2j}$ and
	$[N]_{i}\ge1$), while $\Phi_{k-j,N-j}\ge1$, so the conclusion below holds with a
	constant $C(j,\alpha,p)$.
	Summing,
	$\Phi_{k,N}\le\bigl(N^{j}+C(j,\alpha,p)N^{2j}\bigr)\Phi_{k-j,N-j}
	\le C(j,\alpha,p)N^{2j}\Phi_{k-j,N-j}$, and since
	$|K^{p,N}_{k}(\alpha)|=p^{k}[N]_{k}\Phi_{k,N}$ and
	$|S_{k,N}|=[k]_{j}\,p^{k-j}[N-j]_{k-j}\Phi_{k-j,N-j}$ with
	$[N]_{k}=[N]_{j}[N-j]_{k-j}$ and $[k]_{j}\ge1$, one obtains
	\begin{equation}\label{eq:alpha-S-comparison}
		|K^{p,N}_{k}(\alpha)|\le p^{j}\,[N]_{j}\,\frac{\Phi_{k,N}}{\Phi_{k-j,N-j}}\,
		|S_{k,N}|\le C(j,\alpha,p)\,N^{3j+1}\,|S_{k,N}|
		\quad (j\le k\le N),
	\end{equation}
	a uniform polynomial bound (no attempt is made here to optimise the exponent $3j+1$).
	By \eqref{eq:SobXi}--\eqref{eq:Xidef},
	\begin{equation}\label{eq:Y-split}
		|Y_{m,N}(z)|\le m|\theta_{m}^{[N]}(z)\Kr{m}(z)|
		+\sum_{\substack{k=j\\ t_{k}\ne0}}^{m-1}\frac{t_{k}}{T_{m}}\,
		\Bigl|m\,\theta_{m}^{[N]}(z)\,S_{m,N}\,K^{p,N}_{k}(z)\big/S_{k,N}\Bigr| .
	\end{equation}
	In the first term $m\theta_{m}^{[N]}(z)\Kr{m}(z)=mP_{m}(z)$ with
	$P_{m}(z)\to\varphi_{K}(z)$ by \eqref{eq:classicalMHK}, hence it is $O(N)$. In the
	remaining terms, \eqref{eq:z-alpha-comparison}--\eqref{eq:alpha-S-comparison} give
	$|K^{p,N}_{k}(z)|/|S_{k,N}|\le C\,N^{3j+1}k^{M(z,\alpha)}$, so it remains to note that the
	normalisation $m\theta_{m}^{[N]}(z)$ eliminates the scale of $S_{m,N}$: using
	\eqref{eq:S-asympt} (valid at $m/N\to x\in(p,1)$) and
	$\theta_{m}^{[N]}(z)=1/[(-1)^{m}(1-p)^{m-N}\Gamma(m-z)]$, the two factors
	$(1-p)^{m-N}$ cancel and
	\begin{equation}\label{eq:theta-S-cancel}
		\bigl|\theta_{m}^{[N]}(z)\,S_{m,N}\bigr|
		\le C(j,\alpha,z)\,m^{j}\,
		\Bigl|\frac{\Gamma(m-j-\alpha)}{\Gamma(m-z)}\Bigr|
		\le C(j,\alpha,z)\,m^{j+|z-j-\alpha|},
	\end{equation}
	a polynomial bound. Since $\sum_{k}t_{k}/T_{m}=1$ and
	$t_{k}/T_{m}\le1$, the sum in \eqref{eq:Y-split} is
	$O\bigl(N^{3j+1}m^{M(z,\alpha)}\cdot m^{j+|z-j-\alpha|+1}\bigr)$, and therefore
	\begin{multline}
		\label{eq:Y-final}
		|Y_{m,N}(z)|\le C(z,\alpha,j)\,m^{j+|z-j-\alpha|+1}
		\bigl(1+N^{3j+1}m^{M(z,\alpha)}\bigr)\\
		\le C(z,\alpha,j)\,N^{4j+|z-j-\alpha|+M(z,\alpha)+3},
	\end{multline}
	which is $\exp(o(N))$, uniformly in the compact ratio range (otherwise a violating
	sequence would have a convergent ratio subsequence, contradicting the same
	computation at its limit). This is the second estimate of
	\eqref{eq:boundary-control}.
\end{proof}

\begin{lemma}[Weight ratio and mass ratio]\label{lem:massratio}
	As $n\to\infty$ (with $n/N\to r$),
	\begin{equation}\label{eq:Lt2}
		\frac{t_{n}}{t_{n-1}}\longrightarrow L_{t}=\frac{r(1-p)}{p(1-r)}=\frac1\varrho>1,
	\end{equation}
	and consequently $t_{n}\to\infty$, $T_{n}\to\infty$, and
	\begin{equation}\label{eq:massratio}
		\frac{T_{n-1}}{T_{n}}\longrightarrow\varrho=\frac1{L_{t}},
		\quad
		(1-w_{n})\,n=\frac{n}{1+\lambda T_{n}}\longrightarrow0 .
	\end{equation}
\end{lemma}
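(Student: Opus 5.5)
We first obtain \eqref{eq:Lt2} from the exact structure of $t_{m}=S_{m}^{2}/d_{m}^{2}$. Write
\[
\frac{t_{n}}{t_{n-1}}=\Bigl(\frac{S_{n}}{S_{n-1}}\Bigr)^{2}\frac{d_{n-1}^{2}}{d_{n}^{2}} .
\]
From Table~\ref{tab:kraw} one has exactly
$d_{n}^{2}/d_{n-1}^{2}=n^{2}\,\frac{N-n+1}{n}\,p(1-p)=n\,(N-n+1)\,p(1-p)$.
For the numerator we use \eqref{eq:fwdK}: $S_{n}=\ff{n}{j}\KrM{N-j}{n-j}(\alpha)$. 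Hence
\[
\frac{S_{n}}{S_{n-1}}=\frac{\ff{n}{j}}{\ff{n-1}{j}}\,\zeta_{n-j}(\alpha),
\]
where $\zeta$ is the consecutive ratio \eqref{eq:zetadef} with $M=N-j$. The prefactor is $n/(n-j)\to1$.

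Lemma~\ref{lem:ratioexp} applies at $w=\alpha\notin\Nzero$, with $s=n-j$, $M=N-j$ and $s/M\to r\in(p,1)$. It gives $\zeta_{n-j}(\alpha)=-(1-p)n+O(1)$, so $(S_{n}/S_{n-1})^{2}=(1-p)^{2}n^{2}(1+o(1))$. Only the leading order is needed here; even the crude bound \eqref{eq:zetacrude} suffices. Dividing,
\[
\frac{t_{n}}{t_{n-1}}=\frac{(1-p)^{2}n^{2}(1+o(1))}{n(N-n+1)p(1-p)}\longrightarrow\frac{r(1-p)}{p(1-r)}=L_{t}.
\]
Finally, $L_{t}>1$ is equivalent to $r>p$, and $L_{t}=1/\varrho$ by \eqref{eq:rhogammadef}.

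The growth statements come next. Lemma~\ref{lem:boundary} gives $\log t_{n,N}=N\Psi_{p}(n/N)+o(N)$. Since $\Psi_{p}$ is increasing on $(p,1)$ with $\Psi_{p}(p)=-\log(1-p)>0$, we get $\Psi_{p}(r)>0$. Therefore $t_{n}\to\infty$ exponentially in $N$, and $T_{n}\ge t_{n-1}\to\infty$ as well.

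For the mass ratio, write
\[
\frac{T_{n-1}}{T_{n}}=1-\frac{t_{n-1}}{T_{n}} .
\]
Along the diagonal $T_{n}$ and $T_{n-1}$ are different columns of the triangular array. We therefore avoid a naive Stolz--Ces\`aro argument and instead use Lemma~\ref{lem:triangular} with $N$ fixed. Set
\[
x_{m,N}=\frac{T_{m,N}}{t_{m-1,N}},
\]
which by $T_{m}=T_{m-1}+t_{m-1}$ satisfies the exact affine recursion
\[
x_{m,N}=1+\frac{t_{m-2,N}}{t_{m-1,N}}\,x_{m-1,N}.
\]
Take $a_{N}=\lfloor(r-\eta)N\rfloor$ and $b_{N}=n$, with $\eta$ small so that $r-2\eta>p$. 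On this window the multiplier is $\nu_{m,N}=t_{m-2}/t_{m-1}$. By the argument above applied at every ratio $x\in[r-\eta,r]$, together with the same subsequence-compactness device used in Lemma~\ref{lem:ratioexp}, one has $\nu_{m,N}=\varrho(m/N)+o(1)$ uniformly. Hence $|\nu_{m,N}|\le q<1$ on the window.

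The limiting value of the multiplier is $\nu=\varrho(r)$ only on the terminal part of the window. Exactly as in Lemma~\ref{lem:ratioexp}, we first use the macroscopic window to bound $x_{m,N}$, and then apply Lemma~\ref{lem:triangular} on a terminal window of length $\lfloor\sqrt N\rfloor$. There the multipliers converge uniformly to $\varrho$ and $f_{m,N}\equiv1$. The boundary hypothesis $q^{b_{N}-a_{N}}x_{a_{N},N}\to0$ on the macroscopic window is exactly the first estimate $\log^{+}(T_{m,N}/t_{m-1,N})=o(N)$ of \eqref{eq:boundary-control}, because $b_{N}-a_{N}\asymp N$. The lemma then gives
\[
\frac{T_{n}}{t_{n-1}}\to\frac{1}{1-\varrho},\qquad\text{so}\qquad \frac{T_{n-1}}{T_{n}}=1-\frac{t_{n-1}}{T_{n}}\to1-(1-\varrho)=\varrho .
\]

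The main obstacle is this step: obtaining the uniformity of $t_{m-1}/t_{m}\to\varrho(m/N)$ across the window, and controlling the boundary term. Both are handled by the compactness argument and by Lemma~\ref{lem:boundary}.

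Last, $(1-w_{n})n=n/(1+\lambda T_{n})\le n/(\lambda t_{n-1})$. Since $t_{n-1}=e^{N\Psi_{p}(r)+o(N)}$ with $\Psi_{p}(r)>0$ and $\lambda>0$, this tends to $0$.
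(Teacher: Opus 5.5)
Your proposal is correct and follows the paper's proof essentially step by step. It uses the same ratio computation via the forward shift \eqref{eq:fwdK} and the leading order \eqref{eq:zetacrude}, the same exponential growth of $t_{n-1,N}$ from Lemma~\ref{lem:boundary}, and the same two-window triangular contraction (a macroscopic window killed by the boundary estimate \eqref{eq:boundary-control}, then a terminal window of length $\lfloor\sqrt N\rfloor$), and your variable $x_{m,N}=T_{m,N}/t_{m-1,N}$ is simply the paper's $\phi_{m,N}+1$ with $\phi_{m,N}=T_{m-1,N}/t_{m-1,N}$, so your limit $1/(1-\varrho)$ is the paper's $\phi_{n,N}\to1/(L_t-1)$ in shifted form.
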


\begin{proof}
	By $t_{m}=S_{m}^{2}/d_{m}^{2}$ and the norm ratio
	$d_{n-1}^{2}/d_{n}^{2}=1/(n(N-n+1)p(1-p))$ of Table~\ref{tab:kraw},
	$t_{n}/t_{n-1}=(S_{n}/S_{n-1})^{2}\,d_{n-1}^{2}/d_{n}^{2}$. The forward-shift
	\eqref{eq:fwdK} gives $S_{n}/S_{n-1}=\tfrac{n}{n-j}\,\zeta^{[N-j]}_{n-j}(\alpha)$, and by
	the leading order \eqref{eq:zetacrude} at $w=\alpha$, length $N-j$,
	$\zeta^{[N-j]}_{n-j}(\alpha)=-(1-p)(n-j)(1+o(1))$, so $(S_{n}/S_{n-1})^{2}=(1-p)^{2}n^{2}(1+o(1))$.
	Therefore
	\[
		\frac{t_{n}}{t_{n-1}}=\frac{(1-p)^{2}n^{2}(1+o(1))}{n(N-n+1)p(1-p)}
		=\frac{(1-p)\,n}{(N-n+1)\,p}(1+o(1))\longrightarrow\frac{(1-p)r}{(1-r)p}=L_{t},
	\]
	which is \eqref{eq:Lt2}; only the pointwise leading order \eqref{eq:zetacrude} is used.
	This local ratio is not used to infer growth from an $N$-dependent initial value.
	Instead, Lemma~\ref{lem:boundary} gives
	$t_{n-1,N}=\exp(N\Psi_p(r)+o(N))$, with $\Psi_p(r)>0$. Hence
	$T_{n,N}\ge t_{n-1,N}$ grows exponentially, so $T_{n,N}/n\to\infty$ and
	$(1-w_{n,N})n=n/(1+\lambda T_{n,N})\to0$.

	It remains to determine the mass ratio. Set
	$\phi_{m,N}=T_{m-1,N}/t_{m-1,N}$. From
	$T_{n}=T_{n-1}+t_{n-1}$,
	the following is an identity with $N$ fixed:
	\begin{equation}\label{eq:phirec}
		\phi_{m+1,N}=\frac{1}{c_{m,N}}\,\phi_{m,N}+\frac{1}{c_{m,N}},
		\quad c_{m,N}=\frac{t_{m,N}}{t_{m-1,N}}.
	\end{equation}
	Choose $\eta>0$ with $p<r-2\eta$ and $a_N=\lfloor(r-\eta)N\rfloor$.  The ratio
	calculation above, applied along arbitrary sequences in
	$[r-\eta,r+o(1)]$, makes $1/c_{m,N}\le q<1$ uniformly there.  By
	Lemma~\ref{lem:boundary}, $\phi_{a_N,N}=\exp(o(N))$, so iteration of
	\eqref{eq:phirec} across the macroscopic window kills the boundary term and makes
	$\phi_{m,N}$ uniformly bounded on its terminal half.  On the last
	$\ell_N=\lfloor\sqrt N\rfloor$ indices, $1/c_{m,N}\to1/L_t$ uniformly.  Applying
	Lemma~\ref{lem:triangular} on that terminal window gives
	$\phi_{n,N}\to(1/L_t)/(1-1/L_t)=1/(L_t-1)$. Therefore
	$T_{n-1,N}/T_{n,N}=\phi_{n,N}/(\phi_{n,N}+1)\to1/L_t=\varrho$, proving
	\eqref{eq:massratio} without treating \eqref{eq:phirec} as a diagonal recursion.
\end{proof}

\begin{lemma}[Top-bracket limit]\label{lem:topbracket}
	For each fixed $z\in\C\setminus\Nzero$,
	\begin{equation}\label{eq:topbracket}
		n\,\theta_{n}(z)\,Q_{n,n-1}(z)\longrightarrow
		\beta(z)=-\,\frac{r}{r-p}\,(z-\alpha)\,\varphi_{K}(z),
		\quad n\to\infty .
	\end{equation}
\end{lemma}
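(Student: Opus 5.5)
The plan is to factor out the lower Krawtchouk polynomial, so that the top bracket becomes a \emph{difference of two consecutive-index ratios}: one at $z$ with length $N$, one at $\alpha$ with length $N-j$. Lemma~\ref{lem:ratioexp} gives both ratios to order $o(1)$. The leading linear terms $-(1-p)n$ cancel, and what survives is exactly $(1-p)\bigl(\gamma(z)-\gamma(\alpha)\bigr)$. The remaining normalisation is then handled by Lemma~\ref{lem:dompoint} and the ratio identity \eqref{eq:kapparatio}.

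\emph{Step 1 (factorisation).} Since $z\notin\Nzero$, Lemma~\ref{lem:ratioexp} (with $M=N$, $s=n$, $x=r$) gives $\Kr{n-1}(z)\neq0$ for large $n$, so I write
\[
Q_{n,n-1}(z)=\Kr{n-1}(z)\Bigl[\zeta^{[N]}_{n}(z)-\frac{S_{n}}{S_{n-1}}\Bigr].
\]
By the forward-shift identity \eqref{eq:fwdK},
\[
\frac{S_{n}}{S_{n-1}}=\frac{n}{n-j}\,\zeta^{[N-j]}_{n-j}(\alpha),
\]
exactly as in the proof of Lemma~\ref{lem:massratio}.

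\emph{Step 2 (two-term expansions and cancellation).} I apply Lemma~\ref{lem:ratioexp} twice.
\begin{itemize}
\item At $w=z$, with $M=N$ and $s=n$, it gives $\zeta^{[N]}_{n}(z)=-(1-p)\bigl(n-\gamma(z)\bigr)+o(1)$.
\item At $w=\alpha$, with $M=N-j$ and $s=n-j$, it gives $\zeta^{[N-j]}_{n-j}(\alpha)=-(1-p)\bigl(n-j-\gamma(\alpha)\bigr)+o(1)$. This is legitimate because $s/M\to r\in(p,1)$.
\end{itemize}
Multiplying the second expansion by $n/(n-j)=1+O(1/n)$ and using $(n-j-\gamma)\,n/(n-j)=n-\gamma+O(1/n)$ gives
\[
\frac{S_{n}}{S_{n-1}}=-(1-p)\bigl(n-\gamma(\alpha)\bigr)+o(1).
\]
Here the $o(1)$ error is only multiplied by a factor tending to $1$, and the shift $j$ disappears; this is the source of the independence of $j$. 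Subtracting, and using $\gamma(z)-\gamma(\alpha)=r(z-\alpha)/(r-p)$,
\[
\zeta^{[N]}_{n}(z)-\frac{S_{n}}{S_{n-1}}
=(1-p)\bigl(\gamma(z)-\gamma(\alpha)\bigr)+o(1)
=\frac{(1-p)\,r}{r-p}\,(z-\alpha)+o(1).
\]

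\emph{Step 3 (normalisation).} By \eqref{eq:kapparatio}, $\theta_{n}(z)/\theta_{n-1}(z)=-1/\bigl((1-p)(n-1-z)\bigr)$. Lemma~\ref{lem:dompoint} along $(n-1)/N\to r$ gives $\theta_{n-1}(z)\Kr{n-1}(z)\to\varphi_{K}(z)$. Therefore
\[
n\,\theta_{n}(z)\,\Kr{n-1}(z)
=-\frac{n}{(1-p)(n-1-z)}\,\theta_{n-1}(z)\Kr{n-1}(z)
\longrightarrow-\frac{\varphi_{K}(z)}{1-p}.
\]
Multiplying this limit by the limit of Step 2 yields $\beta(z)=-\tfrac{r}{r-p}(z-\alpha)\varphi_{K}(z)$.

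\emph{Main obstacle.} The real difficulty is that the result depends on a cancellation at order $n$. Both ratios are needed to absolute accuracy $o(1)$, not merely to relative accuracy $1+o(1)$ as \eqref{eq:zetacrude} would give. That finite second-order term is precisely the content of Lemma~\ref{lem:ratioexp}, which is assumed here. What remains to be careful about is that the lemma is applied along admissible triangular index/length sequences: $(n,N)$ at $z$ and $(n-j,N-j)$ at $\alpha$, both of ratio tending to $r>p$.
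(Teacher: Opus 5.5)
Your proposal is correct and is essentially the paper's argument. Both proofs feed the two-term expansion of Lemma~\ref{lem:ratioexp} at $z$ (length $N$) and at $\alpha$ (length $N-j$, with the factor $n/(n-j)$ absorbed) into the top bracket, so that the leading terms $-(1-p)n$ cancel and $(1-p)\bigl(\gamma(z)-\gamma(\alpha)\bigr)$ survives. The only difference is bookkeeping: the paper normalises by $P_{n}(z)=\theta_{n}(z)\Kr{n}(z)$ and expands $P_{n-1}/P_{n}$ to order $o(1/n)$, whereas you factor out $\Kr{n-1}(z)$ and use $n\theta_{n}(z)\Kr{n-1}(z)\to-\varphi_{K}(z)/(1-p)$, which is slightly cleaner.
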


\begin{proof}
	Write $P_{s}(z)=\theta_{s}(z)\Kr{s}(z)\to\varphi_{K}(z)$ by \eqref{eq:classicalMHK}. From
	$Q_{n,n-1}(z)=\Kr{n}(z)-\tfrac{S_{n}}{S_{n-1}}\Kr{n-1}(z)$ and
	$\theta_{n}(z)/\theta_{n-1}(z)=-1/((1-p)(n-1-z))$ of \eqref{eq:kapparatio},
	\begin{multline}
	    \label{eq:topsplit}
		n\theta_{n}(z)Q_{n,n-1}(z)
		=n P_{n}(z)-\frac{S_{n}}{S_{n-1}}\,\frac{n\,\theta_{n}(z)}{\theta_{n-1}(z)}P_{n-1}(z)\\
		=n P_{n}(z)+\frac{S_{n}}{S_{n-1}}\,\frac{n}{(1-p)(n-1-z)}\,P_{n-1}(z) .
	\end{multline}
	By the forward-shift \eqref{eq:fwdK},
	$S_{n}/S_{n-1}=\tfrac{[n]_{j}}{[n-1]_{j}}\,\zeta^{[N-j]}_{n-j}(\alpha)$ with
	$[n]_{j}/[n-1]_{j}=n/(n-j)$, and by Lemma~\ref{lem:ratioexp} at $w=\alpha$ (length
	$N-j$, ratio $=n/N\to r$) one has
	\begin{equation}\label{eq:j-cancel}
		\frac{S_{n}}{S_{n-1}}
		=\frac{n}{n-j}\bigl[-(1-p)((n-j)-\gamma(\alpha))+o(1)\bigr]
		=-(1-p)\bigl(n-\gamma(\alpha)\bigr)+o(1),
	\end{equation}
	which is the same expansion as in the case $j=0$ (where the factor $n/(n-j)$ equals
	$1$ and $\zeta^{[N-j]}_{n-j}$ is replaced by $\zeta^{[N]}_{n}$, with the same limit
	$\gamma(\alpha)$): the $j$-dependent factors $[n]_{j}/[n-1]_{j}$ and the shift
	$N-j$ cancel exactly at the leading and second order, which is the content of the
	independence of $j$ in Theorem~\ref{thm:mainMH}. Hence
	\[
		\frac{S_{n}}{S_{n-1}}\,\frac{n}{(1-p)(n-1-z)}
		=-\frac{n\,(n-\gamma(\alpha))}{n-1-z}+o(1) .
	\]
	By Lemma~\ref{lem:ratioexp} at $w=z$, $P_{n}(z)/P_{n-1}(z)
	=(\theta_{n}/\theta_{n-1})\zeta_{n}(z)=\dfrac{n-\gamma(z)}{n-1-z}+o(1/n)
	=1+\dfrac{1+z-\gamma(z)}{n}+o(1/n)$, hence
	$P_{n-1}(z)=P_{n}(z)\bigl(1-\tfrac{1+z-\gamma(z)}{n}+o(1/n)\bigr)$. Substituting both into
	\eqref{eq:topsplit},
	\[
		n\theta_{n}(z)Q_{n,n-1}(z)
		=P_{n}(z)\Bigl[n-\frac{n(n-\gamma(\alpha))}{n-1-z}
		\Bigl(1-\tfrac{1+z-\gamma(z)}{n}\Bigr)\Bigr]+o(1) .
	\]
	Expanding $\dfrac{n(n-\gamma(\alpha))}{n-1-z}=n+(1+z-\gamma(\alpha))+o(1)$ and multiplying
	by $\bigl(1-\tfrac{1+z-\gamma(z)}{n}\bigr)$ gives $n+\gamma(z)-\gamma(\alpha)+o(1)$, so the
	bracket tends to $-(\gamma(z)-\gamma(\alpha))=-\tfrac{r}{r-p}(z-\alpha)$ by
	\eqref{eq:rhogammadef}. Therefore $n\theta_{n}(z)Q_{n,n-1}(z)\to
	-\tfrac{r}{r-p}(z-\alpha)\varphi_{K}(z)=\beta(z)$.
	The same calculation holds for every limiting ratio $x\in(p,1)$, with $r$ replaced by
	$x$; a subsequence argument makes the convergence and boundedness uniform when $n/N$
	ranges over a compact subinterval of $(p,1)$.
\end{proof}

\begin{theorem}\label{thm:mainMH}
	Let $\{\Sob{n}\}$ be the Krawtchouk--Sobolev family for a single left mass
	($\lambda>0$, $\alpha<0$, $j\ge1$, all fixed), as $n,N\to\infty$ with $n/N\to r$ and
	$0<p<r<1$. Then, for each fixed $z\in\C\setminus\Nzero$,
	\begin{equation}\label{eq:mainMH}
		\lim_{\substack{n,N\to\infty\\ n/N\to r}}
		\frac{n}{\kappa_{n}\,\Gamma(n-z)}\,\Sob{n}(z)
		=-C_{\ast}\,(z-\alpha)\,\varphi_{K}(z)=G(z),
		\quad C_{\ast}=\frac{r^{2}(1-p)}{(r-p)^{2}} .
	\end{equation}
	The limit is independent of the mass strength $\lambda>0$ and of the difference order
	$j\ge1$ (each held fixed).
\end{theorem}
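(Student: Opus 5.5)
We start from the decomposition \eqref{eq:SobXi}, multiplied by $n\theta_{n}(z)$:
\[
	n\theta_{n}(z)\Sob{n}(z)=(1-w_{n})\,n\,\theta_{n}(z)\Kr{n}(z)+w_{n}\,Y_{n,N}(z).
\]
The first term equals $(1-w_{n})n\cdot P_{n}(z)$, where $P_{n}(z)=\theta_{n}(z)\Kr{n}(z)\to\varphi_{K}(z)$ by Lemma~\ref{lem:dompoint}. Since $(1-w_{n})n\to0$ by \eqref{eq:massratio}, this term tends to $0$. The same lemma gives $T_{n}\to\infty$, hence $w_{n}\to1$. The theorem is therefore reduced to proving
\[
	Y_{n,N}(z)\longrightarrow \frac{\beta(z)}{1-\varrho}
	=-\frac{r}{r-p}\cdot\frac{r(1-p)}{r-p}\,(z-\alpha)\varphi_{K}(z)=G(z),
\]
using $1/(1-\varrho)=r(1-p)/(r-p)$. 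This reduction also explains why $\lambda$ disappears: it enters only through $w_{n}$ and $1-w_{n}$.

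To obtain this limit we apply Lemma~\ref{lem:triangular} to the fixed-$N$ affine recursion \eqref{eq:Yrec}, with $f_{m,N}=m\theta_{m}(z)Q_{m,m-1}(z)$ and multiplier $\nu_{m,N}$. The first task is to identify the limit of the multiplier. By Lemma~\ref{lem:shift}, \eqref{eq:j-cancel} and \eqref{eq:massratio},
\[
	\nu_{m,N}=\frac{S_{m}}{S_{m-1}}\cdot\frac{T_{m-1}}{T_{m}}\cdot\frac{m}{m-1}\cdot\frac{-1}{(1-p)(m-1-z)}
	=\frac{T_{m-1}}{T_{m}}\bigl(1+O(1/m)\bigr).
\]
Consequently $\nu_{m,N}\to\varrho$ along any sequence with $m/N\to x$, where $\varrho$ is replaced by $\varrho(x)$ for a general ratio. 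By Lemma~\ref{lem:topbracket}, $f_{m,N}\to\beta(z)$, the limit being taken with ratio $x$.

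The argument then mirrors the two-window scheme in the proofs of Lemma~\ref{lem:ratioexp} and Lemma~\ref{lem:massratio}. We choose $\eta>0$ with $p<r-2\eta$ and set $a_{N}=\lfloor(r-\eta)N\rfloor$. A subsequence argument, using the uniform versions of Lemma~\ref{lem:topbracket} and of the mass-ratio computation on compact ratio ranges, gives two uniform bounds on $a_{N}<m\le n$. First, $|\nu_{m,N}|\le\bar q$ for some $\bar q<1$, since $\varrho(x)\le\varrho(r-\eta)<1$; note that $T_{m-1}/T_{m}=\phi_{m}/(\phi_{m}+1)$, and $\phi_{m}$ converges uniformly to $1/(L_t(x)-1)$ via the same contraction argument. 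Second, $|f_{m,N}|\le C$.

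Iterating \eqref{eq:Yrec} across the macroscopic window then yields
\[
	|Y_{m,N}|\le\frac{C}{1-\bar q}+\bar q^{\,m-a_{N}}|Y_{a_{N},N}|.
\]
By Lemma~\ref{lem:boundary}, $|Y_{a_{N},N}|=e^{o(N)}$, while $m-a_{N}\asymp N$ on the terminal half of the window. The boundary term is therefore killed, and $Y_{m,N}$ is uniformly bounded there.

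Finally, we apply Lemma~\ref{lem:triangular} on the terminal window $n-\lfloor\sqrt N\rfloor<m\le n$. On this window $m/N\to r$ uniformly, so $\nu_{m,N}\to\varrho$ and $f_{m,N}\to\beta(z)$ uniformly. The boundary term satisfies $\bar q^{\sqrt N}\cdot O(1)\to0$. The lemma gives $Y_{n,N}\to\beta(z)/(1-\varrho)=G(z)$.

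Independence of $j$ follows because $j$ enters only through $S_{m}/S_{m-1}$. By \eqref{eq:j-cancel} this ratio has the same two-term expansion for every $j$, and $T_{m-1}/T_{m}$ has the $j$-independent limit $\varrho$.

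The main obstacle is the uniformity needed on the terminal window. Lemma~\ref{lem:topbracket} and Lemma~\ref{lem:massratio} are stated along diagonal sequences, whereas Lemma~\ref{lem:triangular} requires $f_{m,N}\to\beta$ and $\nu_{m,N}\to\varrho$ uniformly over $\sqrt N$ consecutive indices at fixed $N$. I would settle this by the same device used in Lemma~\ref{lem:ratioexp}: if uniformity failed, a sequence of violating pairs $(m_{N},N)$ would have $m_{N}/N\to r$, so it would itself be an admissible index/length sequence, contradicting the pointwise lemmas. Some additional care is needed to confirm that the uniform bound $|\nu_{m,N}|\le\bar q$ holds on the whole macroscopic window and not merely near $r$.
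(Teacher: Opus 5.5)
Your proposal is correct and follows essentially the same route as the paper: you reduce via \eqref{eq:SobXi} using $(1-w_n)n\to0$ and $w_n\to1$, then run the fixed-$N$ recursion \eqref{eq:Yrec} through a macroscopic contractive window, where Lemma~\ref{lem:boundary} kills the boundary term, and a terminal $\lfloor\sqrt N\rfloor$ window, where Lemma~\ref{lem:triangular} gives $\beta(z)/(1-\varrho)=G(z)$. The residual concern you flag is settled exactly as in the paper, by the subsequence argument over the compact ratio range $[r-\eta,r]\subset(p,1)$, on which $\varrho(x)\le\varrho(r-\eta)<1$; your own proposal already contains this argument.
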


\begin{proof}
	Fix $z\in\C\setminus\Nzero$. All quantities in \eqref{eq:Yrec} are first regarded as
	triangular arrays with $N$ fixed. Its forcing term at $m=n(N)$ tends to $\beta(z)$ by
	Lemma~\ref{lem:topbracket}. Its multiplier is
	\[
		\nu_{m,N}=\frac{T_{m-1,N}}{T_{m,N}}\frac{S_{m,N}}{S_{m-1,N}}
		\frac{m\theta^{[N]}_{m}(z)}{(m-1)\theta^{[N]}_{m-1}(z)}.
	\]
	By \eqref{eq:zetacrude}, \eqref{eq:kapparatio}, and
	Lemma~\ref{lem:massratio} applied along any sequence $m/N\to x\in(p,1)$,
	\[
		\frac{S_{m,N}}{S_{m-1,N}}
		\frac{m\theta^{[N]}_{m}(z)}{(m-1)\theta^{[N]}_{m-1}(z)}\longrightarrow1,
		\quad \nu_{m,N}\longrightarrow\varrho(x)=\frac{p(1-x)}{x(1-p)}.
	\]

	Choose $\eta>0$ with $p<r-2\eta$ and put $a_N=\lfloor(r-\eta)N\rfloor$.
	The preceding sequential limits imply that on $a_N<m\le n(N)$ the multipliers are
	uniformly bounded by some $q<1$, while the forcing terms are uniformly bounded.
	Lemma~\ref{lem:boundary} gives $|Y_{a_N,N}(z)|=\exp(o(N))$; direct iteration of
	\eqref{eq:Yrec} therefore kills the boundary contribution
	$q^{n-a_N}Y_{a_N,N}$ and makes $Y_{m,N}$ uniformly bounded on the terminal half of
	this macroscopic window.

	On the last $\ell_N=\lfloor\sqrt N\rfloor$ indices, $m/N\to r$ uniformly. Thus the
	forcing tends uniformly to $\beta(z)$, the multiplier tends uniformly to $\varrho$, and
	$q^{\ell_N}|Y_{n-\ell_N,N}(z)|\to0$. Lemma~\ref{lem:triangular} now gives
	\[
		Y_{n,N}(z)\longrightarrow\frac{\beta(z)}{1-\varrho}
		=-\frac{r}{r-p}\cdot\frac{1}{1-\varrho}\,(z-\alpha)\varphi_{K}(z).
	\]
	Since $\frac{r}{r-p}\frac{1}{1-\varrho}=\frac{r^{2}(1-p)}{(r-p)^{2}}=C_{\ast}$,
	we obtain $Y_{n,N}(z)\to-C_{\ast}(z-\alpha)\varphi_{K}(z)=G(z)$.
	Finally, applying $n\theta_{n}(z)$ to \eqref{eq:SobXi},
	\[
		\frac{n}{\kappa_{n}\Gamma(n-z)}\Sob{n}(z)
		=(1-w_{n})\,n\theta_{n}(z)\Kr{n}(z)+w_{n}\,Y_{n}(z) ;
	\]
	here $n\theta_{n}(z)\Kr{n}(z)=nP_{n}(z)$ with $P_{n}(z)\to\varphi_{K}(z)$ bounded, and
	$(1-w_{n})n\to0$ by Lemma~\ref{lem:massratio}, so the first term vanishes; since
	$w_{n}\to1$, the left side tends to $G(z)$, which is \eqref{eq:mainMH}. The limit contains
	neither $\lambda$ (the fixed-$N$ recursion \eqref{eq:Yrec} and the ratio $T_{n-1}/T_{n}$ being
	$\lambda$-free, and $w_{n}\to1$) nor $j$, which enters only through $\gamma(\alpha)$ in
	Lemma~\ref{lem:topbracket} via $S_{n}/S_{n-1}$, and there the $j$-dependent shift cancels.
\end{proof}

\subsection{The Uvarov case $j=0$}
\label{subsec:uvarov}

The proof of Theorem~\ref{thm:mainMH} never uses $j\ge1$; the difference order enters only
through the factor $n/(n-j)$ and the length $N-j$ in the ratio $S_{n}/S_{n-1}$, and both
reduce, for $j=0$, to the identity and to the length $N$. We record this explicitly.

\begin{corollary}[Uvarov case]\label{cor:uvarov}
	Let $\{\Sob{n}\}_{0\le n\le N}$ be the monic orthogonal polynomials for the Uvarov
	modification
	\begin{equation}\label{eq:uvarov-ip}
		\langle f,g\rangle_{\lambda}
		=\sum_{x=0}^{N} f(x)g(x)\,\rho(x)+\lambda\,f(\alpha)\,g(\alpha),
	\end{equation}
	with $\lambda>0$ and $\alpha<0$ fixed, and let $n,N\to\infty$ with $n/N\to r$ and
	$0<p<r<1$. Then, for each fixed $z\in\C\setminus\Nzero$,
	\begin{equation}\label{eq:uvarov-limit}
		\lim_{\substack{n,N\to\infty\\ n/N\to r}}
		\frac{n}{\kappa_{n}\,\Gamma(n-z)}\,\Sob{n}(z)
		=-C_{\ast}\,(z-\alpha)\,\varphi_{K}(z),
	\end{equation}
	with the same constant $C_{\ast}=r^{2}(1-p)/(r-p)^{2}$; in particular the limit is
	again independent of $\lambda>0$.
\end{corollary}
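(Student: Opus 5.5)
The plan is to rerun the proof of Theorem~\ref{thm:mainMH} with $j=0$ and check, step by step, that each ingredient survives. First, the structural part. Theorem~\ref{thm:connection} goes through verbatim with $\dif^{0}=\mathrm{Id}$. It gives $\Sob{n}=\Kr{n}-\lambda A_{n}\Ker_{n-1}(x,\alpha)$ with $A_{n}=\Kr{n}(\alpha)/(1+\lambda\Ker_{n-1}(\alpha,\alpha))$. Positive definiteness is again clear, since $\lambda f(\alpha)^{2}\ge0$. We then set $S_{m}=\Kr{m}(\alpha)$, $t_{m}=S_{m}^{2}/d_{m}^{2}$, $T_{n}=\sum_{m<n}t_{m}$, and define $H_{n}$ and $\Xi_{n}$ exactly as in \eqref{eq:Hdef}--\eqref{eq:Xidef}. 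The one point to check is nonvanishing. Because $\alpha<0$, the positivity of the series \eqref{eq:phi-series} (with $M=N$, $s=m$) gives $(-1)^{m}\Kr{m}(\alpha)>0$ for every $m$. Hence $S_{m}\ne0$, $t_{0}=1>0$ and $T_{n}>0$ for all $n\ge1$. This removes the $n\le j$ special case and makes $\mu_{n}$ in Lemma~\ref{lem:xirec} well defined for $n\ge2$, with $\mu_{1}=0$ since $T_{0}=0$. The identity \eqref{eq:xirec} and the scaled recursion \eqref{eq:Yrec} then hold without change.

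Second, the asymptotic inputs. Lemma~\ref{lem:ratioexp} does not involve $j$ at all. Its use in Lemma~\ref{lem:topbracket} now reads $S_{n}/S_{n-1}=\zeta^{[N]}_{n}(\alpha)=-(1-p)(n-\gamma(\alpha))+o(1)$ directly, which is \eqref{eq:j-cancel} with the factor $n/(n-j)$ equal to $1$. So the forcing term again converges to $\beta(z)=-\frac{r}{r-p}(z-\alpha)\varphi_{K}(z)$. In Lemma~\ref{lem:massratio} the ratio $t_{n}/t_{n-1}$ uses only \eqref{eq:zetacrude} and the norm ratio, so it gives $L_{t}=1/\varrho$. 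The recursion \eqref{eq:phirec} and its triangular treatment are also independent of $j$.

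Third, the boundary estimates of Lemma~\ref{lem:boundary}, which I expect to need the most care, though they should simplify rather than break. Parts~1--3 are written with $s=k-j$ and $M=N-j$, and for $j=0$ they apply literally. The text already notes that the only borderline case, equality $s=Mp$, is harmless because of the majorant $\sum_{i}(s/(Mp))^{i}\le s+1$. In Part~4 the comparison \eqref{eq:alpha-S-comparison} becomes trivial, $|\Kr{k}(\alpha)|=|S_{k,N}|$, so \eqref{eq:z-alpha-comparison} alone bounds $|\Kr{k}(z)|/|S_{k}|$ polynomially. The bound \eqref{eq:theta-S-cancel} with $j=0$ still follows from \eqref{eq:S-asympt}. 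Together these give $\log^{+}|Y_{m,N}(z)|=o(N)$ and $\log^{+}(T_{m,N}/t_{m-1,N})=o(N)$. I will spell out the $k=0$ term separately, since there $t_{0}=1$ and the terms are trivially bounded.

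Finally, we repeat the two-window argument of Theorem~\ref{thm:mainMH}. On the macroscopic window $a_{N}=\lfloor(r-\eta)N\rfloor<m\le n$ we have uniform contraction $|\nu_{m,N}|\le q<1$, and the boundary term is killed. On the terminal $\lfloor\sqrt N\rfloor$ window Lemma~\ref{lem:triangular} applies and yields $Y_{n,N}(z)\to\beta(z)/(1-\varrho)=-C_{\ast}(z-\alpha)\varphi_{K}(z)$. Lemma~\ref{lem:massratio} gives $(1-w_{n})n\to0$ and $w_{n}\to1$, since $T_{n}$ grows exponentially because $\Psi_{p}(r)>0$. Inserting these into \eqref{eq:SobXi} multiplied by $n\theta_{n}(z)$ gives \eqref{eq:uvarov-limit}. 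Independence of $\lambda$ follows as before: $\lambda$ enters only through $w_{n}$, and $w_{n}\to1$.
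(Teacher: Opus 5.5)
Your proposal is correct and follows essentially the same route as the paper. The paper likewise reruns Theorem~\ref{thm:connection} with $\dif^{0}=\mathrm{Id}$, and Lemma~\ref{lem:xirec} with base case $n=1$, $\mu_{1}=0$, $\Xi_{1}=Q_{1,0}$. It also reruns Lemma~\ref{lem:boundary} with $s=k$, $M=N$ (including the borderline $k=pN$), and replaces $S_{n}/S_{n-1}$ by $\zeta^{[N]}_{n}(\alpha)$ in Lemmas~\ref{lem:massratio} and~\ref{lem:topbracket}; your explicit check that $(-1)^{m}K^{p,N}_{m}(\alpha)>0$, giving $S_{m}\ne0$ and $T_{n}>0$ for all $n\ge1$, is a small clarification the paper leaves implicit.
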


\begin{proof}
	For $j=0$ the forward difference reduces to the identity, so \eqref{eq:uvarov-ip} is
	\eqref{eq:sobinner}, Theorem~\ref{thm:connection} holds verbatim with
	$\dif^{0}=\mathrm{Id}$, and the quantities of
	\eqref{eq:tm}--\eqref{eq:Xidef} read
	$S_{m}=\Kr{m}(\alpha)$, $\ff{m}{0}=1$, $t_{m}=\Kr{m}(\alpha)^{2}/d_{m}^{2}$,
	$T_{n}=\Ker_{n-1}(\alpha,\alpha)$. Lemma~\ref{lem:xirec} applies for $n\ge1$: for
	$n=1=j+1$ one has $T_{0}=0$, $\mu_{1}=0$, and
	$\Xi_{1}=\Kr{1}-S_{1}H_{1}=\Kr{1}-S_{1}\frac{S_{0}\Kr{0}}{d_{0}^{2}T_{1}}
	=\Kr{1}-\frac{S_{1}}{S_{0}}\Kr{0}=Q_{1,0}$ by \eqref{eq:tm}--\eqref{eq:Tn} with
	$T_{1}=t_{0}=S_{0}^{2}/d_{0}^{2}$; the case $n\ge j+2$ is the proof of
	Lemma~\ref{lem:xirec} unchanged. Lemma~\ref{lem:boundary} applies for $j=0$: in
	\eqref{eq:tail-start}--\eqref{eq:rate-tail} one has $s=k$, $M=N$, $\ff{k}{0}=1$, and
	the estimates \eqref{eq:phi-bound}--\eqref{eq:tail-bound} and
	\eqref{eq:S-asympt}--\eqref{eq:rate-active} are unchanged, as are
	\eqref{eq:z-alpha-comparison}--\eqref{eq:alpha-S-comparison} and
	\eqref{eq:theta-S-cancel}. In particular the extreme case $j=0$, $k/N=p$ is covered by
	\eqref{eq:phi-bound}: there $s/(Mp)=1$ exactly, the geometric majorant is merely
	replaced by the elementary bound $\sum_{i=0}^{s}(s/(Mp))^{i}\le s+1$, and each
	summand of \eqref{eq:phi-series} is still at most $C(\alpha)\,i^{-\alpha-1}$, so
	$\log\Phi=O(\log N)$ as before. Finally, in Lemma~\ref{lem:massratio} and
	Lemma~\ref{lem:topbracket} the ratio $S_{n}/S_{n-1}$ is replaced by
	$\zeta^{[N]}_{n}(\alpha)$, i.e.\ $n/(n-j)=1$ and $N-j=N$: by
	Lemma~\ref{lem:ratioexp} at length $M=N$ it satisfies the same expansion
	$\zeta^{[N]}_{n}(\alpha)=-(1-p)\bigl(n-\gamma(\alpha)\bigr)+o(1)$, so both lemmas and,
	with them, Theorem~\ref{thm:mainMH} give \eqref{eq:uvarov-limit}.
\end{proof}

\begin{remark}[The scale $n$ and the role of $\lambda$]\label{rem:scale}
	The identity \eqref{eq:SobXi} is
	\begin{equation}\label{eq:scale-id}
		\frac{n}{\kappa_{n}\Gamma(n-z)}\Sob{n}(z)
		=(1-w_{n})\,n\theta_{n}(z)\Kr{n}(z)+w_{n}\,Y_{n}(z),
	\end{equation}
	and in the passage to the diagonal limit the first term disappears: $w_{n}\to1$ and
	$(1-w_{n})n\to0$ by Lemma~\ref{lem:massratio}, while
	$n\theta_{n}(z)\Kr{n}(z)=nP_{n}(z)$ grows like $n\varphi_{K}(z)$. Two consequences
	deserve to be stated. First, the limit $G$ is carried entirely by the scaled Sobolev
	correction $Y_{n}$, not by the classical term; the rank-one perturbation, in this
	scale, asymptotically dominates the connection formula. Thus the word
	``Mehler--Heine'' here does not describe a smooth deformation of the classical case
	at the level of the limit: the classical term is asymptotically negligible under the normalisation
	\eqref{eq:thetaK} times $1/n$, and the mechanism is the exponential growth of the total
	mass $T_{n,N}$ of the difference kernel (Lemma~\ref{lem:massratio}), which forces
	$w_{n}\to1$. Second, the normalisation \eqref{eq:thetaK} is \emph{$n$ times} the
	classical one: for $\lambda=0$ one has $\Sob{n}=\Kr{n}$ and the quantity
	\eqref{eq:thetaK} of the left-hand side of \eqref{eq:scale-id} behaves like
	$n\varphi_{K}(z)$, which diverges. The theorem therefore establishes independence of
	$\lambda$ for each fixed $\lambda>0$; it does not assert continuity at $\lambda=0$,
	and the limit $G$ cannot be obtained by setting $\lambda=0$ in the normalised
	family. The values $\lambda=\lambda_{n}$ tending to $0$ belong to the regime of
	rescaled masses: the natural scaling is $\lambda_{n}T_{n,N}\asymp1$, i.e.
	$\lambda_{n}\asymp e^{-N\Psi_{p}(r)}$, for which the two terms of
	\eqref{eq:scale-id} would stay of comparable size; we do not treat it here (see the
	conclusions).
\end{remark}

\section{Numerical illustration}
\label{sec:numerics}

We illustrate Theorem~\ref{thm:mainMH} for a single left exterior mass with parameters
$p=0.3$, $r=3/5$ (so that $n=3t$, $N=5t$ makes $r=n/N$ exact), $\alpha=-2.5$, $j=2$ and
$\lambda=5$, for which $C_{\ast}=r^{2}(1-p)/(r-p)^{2}=2.8$. The scaled sequence is
\begin{equation}\label{eq:Fn}
	F_{n}(z)=\frac{n}{\kappa_{n}\,\Gamma(n-z)}\,\Sob{n}(z),
\end{equation}
and for every fixed $z\in\C\setminus\Nzero$ the limit of Theorem~\ref{thm:mainMH} reads
\begin{equation}\label{eq:Fn-limit}
	F_{n}(z)\longrightarrow-C_{\ast}(z-\alpha)\,\varphi_{K}(z),
\end{equation}
whereas at the lattice nodes $z\in\Nzero$ the theorem does not apply.
Since $\deg\Sob{n}=n$ reaches $135$ in the portraits and $300$ on the real axis, where
$\Gamma(n-z)$ and $\Sob{n}$ overflow double precision while their ratio stays of order one,
all quantities are evaluated with $80$ decimal digits in multiprecision arithmetic
(\texttt{mpmath}): the classical polynomials from the recurrence \eqref{eq:ttrr},
cross-checked at $50$ digits against
\eqref{eq:Khyp} and the residual of \eqref{eq:sodeK}, and $\Sob{n}$ from
\eqref{eq:connection}. The self-checking script reproducing every number of
this section is available from the corresponding author and will be provided to the
editors and referees during the evaluation on request, and it will accompany the
archived data set of this manuscript.

Figure~\ref{fig:portraits} shows $\log_{10}|F_{n}(z)|$ over $\mathrm{Re}\,z\in[-4,4.5]$,
$\mathrm{Im}\,z\in[-1.2,1.2]$ for $n=45,90,135$ (panels (a)--(c)) and the limit
$-C_{\ast}(z-\alpha)\varphi_{K}(z)$ (panel (d)). As the degree grows the finite-degree
portraits converge to the limiting profile at each fixed point off the lattice; the square
marks the exterior mass point $\alpha=-2.5$, and the circles mark the classical lattice
$0,1,2,\dots$ of the reciprocal-Gamma profile. The convergence described by
Theorem~\ref{thm:mainMH} concerns each fixed $z\notin\Nzero$; the behaviour at the
lattice nodes is visible numerically but is not covered by the theorem.

\begin{figure}[ht]
	\centering
	\includegraphics[width=0.74\linewidth]{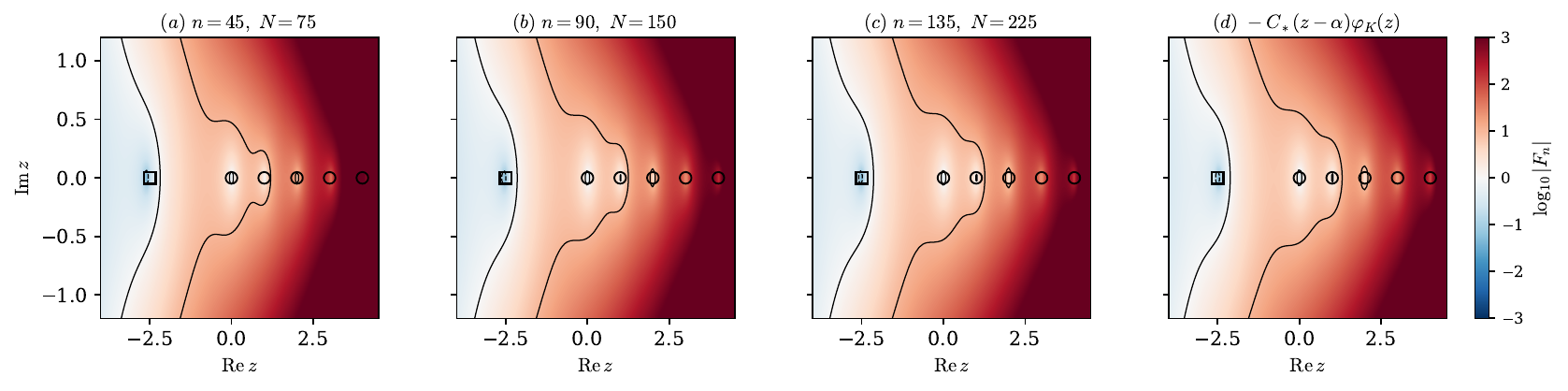}
	\caption{Complex-plane log-modulus portraits of $F_{n}(z)$ of \eqref{eq:Fn}
		($p=0.3$, $r=3/5$, $\alpha=-2.5$, $j=2$, $\lambda=5$). Panels (a)--(c): degrees
		$n=45,90,135$ ($N=75,150,225$); panel (d): the limit
		$-C_{\ast}(z-\alpha)\varphi_{K}(z)$. The square marks the exterior mass point
		$\alpha$, and the open circles mark the lattice $\Nzero$ of the
		reciprocal-Gamma profile.}
	\label{fig:portraits}
\end{figure}

For real $z$ the scaled quantity $F_{n}(z)$ is real. Figure~\ref{fig:realaxis} displays
$F_{n}(z)$ for $n=60,120,180,240,300$ on $z\in[-4,2.6]$, together with
$-C_{\ast}(z-\alpha)\varphi_{K}(z)$; as $n$ increases the coloured curves track the black
limit ever more closely at each fixed $z$, in agreement with the pointwise limit
\eqref{eq:Fn-limit} of Theorem~\ref{thm:mainMH}. At the lattice nodes the theorem does not
apply: the values displayed there are computed directly and shown for completeness,
but they are not covered by the theorem.

\begin{figure}[ht]
	\centering
	\includegraphics[width=0.5\linewidth]{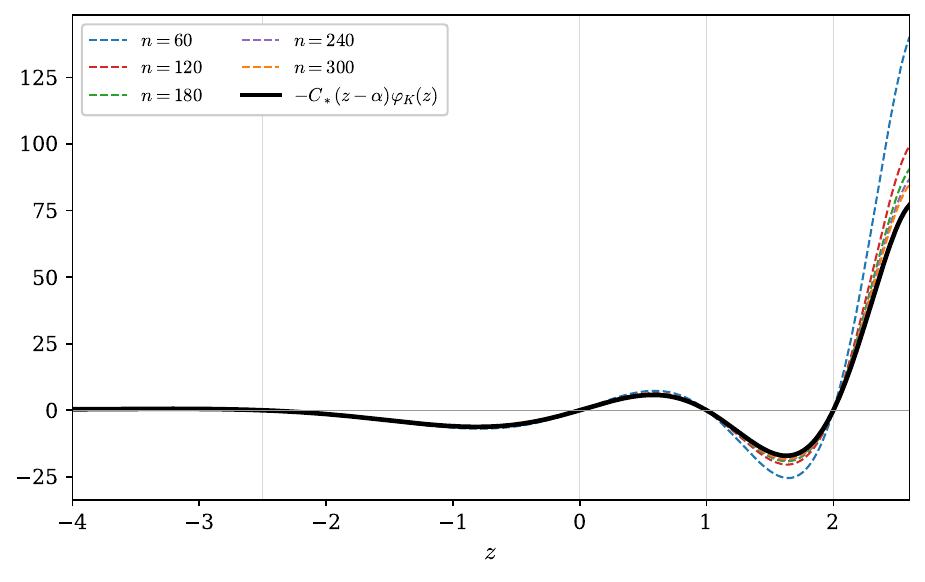}
	\caption{Real-axis convergence of $F_{n}(z)$ (dashed) for $n=60,120,180,240,300$
		to the limit $-C_{\ast}(z-\alpha)\varphi_{K}(z)$ (thick black), parameters as in
		Figure~\ref{fig:portraits}. Vertical guides mark $\alpha=-2.5$ and the nodes
		$0,1,2$.}
	\label{fig:realaxis}
\end{figure}

\FloatBarrier

%%%%%%%%%%%%%%%%%%%%%%%%%%%%%%%%%%%%%%%%%%%%%%%%%%%%%%%%%%%%%%%%%%%%%%%%%%%%%%%%%%%%
\section{Conclusions}
\label{sec:conclu}
 
We have established the large-degree behaviour of the monic Krawtchouk--Sobolev type
polynomials $\Sob{n}$ associated with a single exterior mass point $\alpha<0$, placed off
the support $\{0,1,\dots,N\}$ and acting through a fixed forward difference of order $j$.
In the joint limit $n,N\to\infty$, $n/N\to r$, with $0<p<r<1$ (the reciprocal-Gamma regime
of Dominici's classification), Theorem~\ref{thm:mainMH} proves that, for every fixed
$z\in\C\setminus\Nzero$,
\begin{equation*}
\frac{n}{\kappa_{n}\,\Gamma(n-z)}\,\Sob{n}(z)
\longrightarrow
-C_{\ast}\,(z-\alpha)\,\varphi_{K}(z),
\quad
C_{\ast}=\frac{r^{2}(1-p)}{(r-p)^{2}}.
\end{equation*}
The exterior perturbation thus remains visible in the limiting profile through the single
linear factor $z-\alpha$, which recovers the position of the mass point exactly, whereas
neither its fixed strength $\lambda>0$ nor the fixed difference order $j\ge1$ leaves any
trace. The same formula, with the same constant $C_{\ast}$, holds for the pure Uvarov
modification $j=0$ (Corollary~\ref{cor:uvarov}).
 
The derivation also identifies the mechanism behind this universality through elementary,
explicitly computed objects. The rank-one connection formula
(Theorem~\ref{thm:connection}) and the reproducing-kernel recursion reduce the Sobolev
correction to an exact affine recursion at fixed $N$, whose limiting multiplier
$\varrho=p(1-r)/(r(1-p))$ satisfies $\varrho<1$ precisely when $r>p$. This permits the
triangular contraction of Lemma~\ref{lem:triangular}, which, combined with the pointwise
two-term expansion of the consecutive-index Krawtchouk ratio (Lemma~\ref{lem:ratioexp})
read at $z$ and at $\alpha$, produces both the factor $z-\alpha$ and the constant. The
latter is transparent, factoring as $C_{\ast}=\frac{r}{r-p}\cdot\frac{1}{1-\varrho}$, the
product of the turn-point geometry and the fixed point of the contraction; the exact
cancellation of the $j$-dependent terms in the ratio $S_{n}/S_{n-1}$ is what makes the
limit the same for all difference orders, $j=0$ included. Methodologically the route is
pointwise and self-contained: its only nonelementary input is Dominici's pointwise limit
\cite[Cor.~3(ii)]{Dominici2020}, used exactly as stated with no uniformity or rate, the
finite second-order information being supplied by the three-term recurrence.
 
A defining feature is the scale at which the perturbation is observed. Under the
normalisation of the theorem, which is $n$ times the classical Mehler--Heine one, the
classical Krawtchouk contribution becomes asymptotically negligible and the normalised
Sobolev correction carries the entire limit, driven by the exponential growth of the total
mass $T_{n,N}$ of the difference kernel. Accordingly, the independence of $\lambda$ holds
for each fixed $\lambda>0$ but does not imply continuity at $\lambda=0$, where the same
normalisation yields the divergent quantity $n\,\varphi_{K}(z)$ (Remark~\ref{rem:scale}).
The persistence of the exterior mass through $z-\alpha$ is the reciprocal-Gamma Krawtchouk
counterpart of the phenomenon established for the discrete Charlier and Meixner
Sobolev-type families in \cite{SoriaMichel2026}, and contrasts with the on-support
Krawtchouk--Sobolev family of \cite{Huertas2022}: the off-support placement $\alpha<0$ is
what lets the location of the mass survive in the local large-degree limit. The
multiprecision computations of Section~\ref{sec:numerics} confirm the pointwise
convergence and the exact value $C_{\ast}=2.8$ for $p=0.3$, $r=3/5$, $\alpha=-2.5$, $j=2$,
$\lambda=5$, while delimiting its scope: convergence is pointwise for fixed
$z\in\C\setminus\Nzero$, with no claim at the lattice nodes $z\in\Nzero$ or about locally
uniform convergence.
 
Several questions remain open. The critical transition $r\to p$, where $\varrho\to1$ and
the triangular argument degenerates, and the complementary regime $r<p$ require a separate
analysis. Varying masses $\lambda=\lambda_{n}$ are also natural, with critical scaling
$\lambda_{n}T_{n,N}\asymp1$, i.e.\ $\lambda_{n}\asymp e^{-N\Psi_{p}(r)}$, for which the
classical and Sobolev contributions could coexist in the limit (Remark~\ref{rem:scale}).
A difference order $j=j_{n}$ growing with $n$, the behaviour at the lattice nodes, several
simultaneous exterior masses, and stronger locally uniform convergence are further
avenues we intend to pursue.

%%%%%%%%%%%%%%%%%%%%%%%%%%%%%%%%%%%%%%%%%%%%%%%%%%%%%%%%%%%%%%%%%%%%%%%%%%%%%%%%%%%%
\section*{Acknowledgements}

The authors would like to thank the Department of Quantitative Methods at Universidad Loyola Andalusia for providing an excellent research environment and institutional support during the development of this work.

\section*{Declarations}

\subsection*{Ethical Approval}
Not applicable.

\subsection*{Consent to Participate}
Not applicable.

\subsection*{Consent to Publish}
Not applicable.

\subsection*{Data Availability Statement}
No external datasets were used in this study.

\subsection*{Author Contributions}
Conceptualization, A.S.-L. and J.-M.; methodology, A.S.-L. and J.-M.; software, A.S.-L. and J.-M.; validation, A.S.-L. and J.-M.; formal analysis, A.S.-L. and J.-M.; investigation, A.S.-L. and J.-M.; resources, A.S.-L. and J.-M.; data curation, A.S.-L. and J.-M.; writing--original draft preparation, A.S.-L. and J.-M.; writing--review and editing, A.S.-L. and J.-M.; visualization, A.S.-L. and J.-M.; supervision, A.S.-L. and J.-M.; project administration, A.S.-L. and J.-M. Both authors have read and agreed to the published version of the manuscript.

\subsection*{Funding}
This research received no external funding.

\subsection*{Competing Interests}
The authors declare that they have no competing interests.

%%%%%%%%%%%%%%%%%%%%%%%%%%%%%%%%%%%%%%%%%%%%%%%%%%%%%%%%%%%%%%%%%%%%%%%%%%%%%%%%%%%%

\end{document}